\title{Sharp Fourier extension for functions with localized support on the circle}
\author[Becker]{Lars Becker}
\address{Mathematical Institute, 
	University of Bonn,
	Endenicher Allee 60, 53115, Bonn,
	Germany}
\email{becker@math.uni-bonn.de}
\date{\today}
\subjclass[2020]{42B10}
\keywords{Circle, Fourier restriction, sharp inequalities}
\theoremstyle{plain}
\newtheorem{theorem}{Theorem}
\newtheorem{lemma}[theorem]{Lemma}
\newtheorem{corollary}[theorem]{Corollary}
\newtheorem{conjecture}[theorem]{Conjecture}
\theoremstyle{definition}
\theoremstyle{remark}
\newcommand{\R}{\mathbb{R}}
\newcommand{\Z}{\mathbb{Z}}
\DeclareMathOperator{\supp}{supp}
\begin{document}

\begin{abstract}
    A well known conjecture states that constant functions are extremizers of the  $L^2 \to L^6$ Tomas-Stein extension inequality for the circle. We prove that functions supported in a $\sqrt{6}/80$-neighbourhood of a pair of antipodal points on $S^1$ satisfy the conjectured sharp inequality. In the process, we make progress on a programm formulated in \cite{Carneiro+2017} to prove the sharp inequality for all functions.
\end{abstract}

\maketitle

\section{Introduction}
We are interested in the conjecture that constant functions are extremizers for the Tomas-Stein Fourier extension inequality for the circle
\begin{equation}
    \label{main ineq}
    \|\widehat{f \sigma}\|_{L^{6}(\R^2)} \leq C \|f\|_{L^2(\sigma)}\,.
\end{equation}
Here $\sigma$ is the arc length measure on the unit circle $S^1 \subset \R^2$ and $\hat \mu(\xi) = \int e^{-ix\xi} \, \mathrm{d}\mu(\xi)$ is the Fourier transform. 

The corresponding conjecture for $S^2$ was proven by Foschi \cite{Foschi2015}, and in \cite{Carneiro+2017} Foschi's argument is adapted to $S^1$, and the conjecture of interest is reduced to the following.

\begin{conjecture}
\label{conjecture}
The quadratic form 
$$
    Q(f) := \int_{(S^1)^6}  (\lvert\omega_1 + \omega_2 + \omega_3\rvert^2 - 1) (f(\omega_1, \omega_2, \omega_3)^2 - f(\omega_1, \omega_2, \omega_3) f(\omega_4, \omega_5, \omega_6)) \,\mathrm{d}\Sigma(\omega)
$$
is positive semi-definite on the subspace $V$ of all antipodal functions in $L^2((S^1)^3, \R)$. Here we denote $\mathrm{d}\Sigma(\omega) = \delta(\sum_{j = 1}^6 \omega_k) \prod_{j = 1}^6 \mathrm{d}\sigma(\omega_j)$, and a function $f$ is antipodal if $f(\pm \omega_1, \pm \omega_2, \pm \omega_3)$ does not depend on the choice of signs.
\end{conjecture}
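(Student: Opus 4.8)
The plan is to reduce Conjecture~\ref{conjecture} to a sequence of explicit positivity statements and then attack those. There are three steps; the first two are elementary, and the third is where the real difficulty lies (and is, I believe, the reason the conjecture is still open).

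\emph{Step 1 (symmetrisation).} On the support of $\mathrm{d}\Sigma$ one has $\omega_1+\omega_2+\omega_3 = -(\omega_4+\omega_5+\omega_6)$, hence $\lvert\omega_1+\omega_2+\omega_3\rvert = \lvert\omega_4+\omega_5+\omega_6\rvert =: t(\omega)$, a quantity taking values in $[0,3]$. Since $\mathrm{d}\Sigma$ is invariant under the permutation exchanging $(\omega_1,\omega_2,\omega_3)$ with $(\omega_4,\omega_5,\omega_6)$, the term $\int (t^2-1)f(\omega_1,\omega_2,\omega_3)^2\,\mathrm{d}\Sigma$ equals $\int (t^2-1)f(\omega_4,\omega_5,\omega_6)^2\,\mathrm{d}\Sigma$, so that
\[
  Q(f) \;=\; \frac12\int_{(S^1)^6}\bigl(t(\omega)^2-1\bigr)\,\bigl(f(\omega_1,\omega_2,\omega_3)-f(\omega_4,\omega_5,\omega_6)\bigr)^2\,\mathrm{d}\Sigma(\omega).
\]
(Note that constants lie in the kernel of $Q$, since then $f-f'\equiv 0$, consistent with the expected extremisers.) The only obstruction to $Q(f)\ge 0$ is the sign change of $t^2-1$ across $\{t=1\}$: the part $\{t>1\}$ contributes positively, and one must show it always dominates the negative contribution of the small part $\{t<1\}$, where $\omega_1+\omega_2+\omega_3$ is short and the three points are spread out.

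\emph{Step 2 (reduction by symmetry).} In angular coordinates $\omega_j=(\cos\theta_j,\sin\theta_j)$ the kernel is $t^2-1 = 2+2\sum_{1\le i<j\le 3}\cos(\theta_i-\theta_j)$; both it and $\mathrm{d}\Sigma$ are invariant under the diagonal rotation $\omega_j\mapsto R\omega_j$ (and under permutations of the six variables). Consequently the Hermitian extension of $Q$ is block-diagonal for $L^2((S^1)^3,\C)=\bigoplus_{N\in\Z}\mathcal H_N$, where $\mathcal H_N$ is spanned by the exponentials $e^{i(k_1\theta_1+k_2\theta_2+k_3\theta_3)}$ with $k_1+k_2+k_3=N$, and $Q(f)=\sum_N Q(f_N)$ for $f=\sum_N f_N$. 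Antipodal functions involve only even frequencies $k_j$, so the conjecture reduces to positive semi-definiteness of $Q$ on the even-frequency part of each block $\mathcal H_N$, $N$ even, one block at a time — and the $S_3$-symmetry permits a further refinement into isotypic components.

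\emph{Step 3 (explicit matrices; the main obstacle).} Writing $\delta(\sum_j\omega_j)=(2\pi)^{-2}\int_{\R^2}e^{ix\cdot\sum_j\omega_j}\,\mathrm{d}x$ and using $\int_{S^1}e^{ik\theta}e^{ix\cdot\omega(\theta)}\,\mathrm{d}\sigma = 2\pi i^k e^{ik\arg x}J_k(\lvert x\rvert)$, one finds that for integers with $\sum_{j=1}^6 m_j=0$,
\[
  \int_{(S^1)^6}e^{i\sum_j m_j\theta_j}\,\mathrm{d}\Sigma(\omega)\;=\;(2\pi)^5\int_0^\infty r\prod_{j=1}^6 J_{m_j}(r)\,\mathrm{d}r\;=:\;(2\pi)^5\,I(m_1,\dots,m_6),
\]
while the left side vanishes when $\sum_j m_j\neq 0$. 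Expanding $f_N$ in the exponential basis then turns $Q(f_N)$ into an explicit Hermitian form in the Fourier coefficients of $f_N$ whose matrix entries are finite $\Z$-linear combinations of the six-Bessel (Weber--Schafheitlin-type) integrals $I(m_1,\dots,m_6)$, and the conjecture becomes the assertion that all of these matrices are positive semi-definite. This is the genuine obstacle, and I see two natural lines of attack. The first is algebraic: to exhibit a bilinear factorisation of the relevant combinations of $I(m_1,\dots,m_6)$ that makes the sign manifest — an $S^1$-analogue of Foschi's ``magic identity'' on $S^2$; the integrals $I(m_1,\dots,m_6)$ do have closed integral and hypergeometric representations, but none of the standard ones is visibly signed. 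The second is analytic: to split the kernel as $(t^2-1)_+-(t^2-1)_-$, use that $(t^2-1)_-$ is supported in the small set $\{t<1\}$, and bound the resulting negative form by a fixed fraction of the positive one using the decay of $I(m_1,\dots,m_6)$ as $\max_j\lvert m_j\rvert\to\infty$, which would leave only a finite problem. One can check that when $f$ is supported in a small neighbourhood of a pair of antipodal points the triple sum $\omega_1+\omega_2+\omega_3$ stays near $\{\pm(1,0),\pm(3,0)\}$, so $t^2-1$ is bounded below by a small negative constant on the support and this analytic route closes — this is the mechanism behind the localised theorem of the present paper. For general $f$ the difficulty is that the bad set $\{t<1\}$ interacts with Fourier modes of every size, so controlling it uniformly appears to require a genuinely new idea.
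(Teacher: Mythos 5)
The statement you were asked about is \cref{conjecture}, which is an \emph{open conjecture}: the paper does not prove it, and neither do you. Your Steps 1 and 2 are correct and are essentially the standard reductions already present in the literature and in this paper: the symmetrisation $Q(f)=\tfrac12\int (t^2-1)(f(\omega_1,\omega_2,\omega_3)-f(\omega_4,\omega_5,\omega_6))^2\,\mathrm{d}\Sigma$ is valid because $\mathrm{d}\Sigma$ is invariant under swapping the two triples and forces $\lvert\omega_1+\omega_2+\omega_3\rvert=\lvert\omega_4+\omega_5+\omega_6\rvert$, and the block-diagonalisation over the total frequency $N$ is exactly the paper's decomposition into the spaces $X_d$. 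Your Bessel-integral formula in Step 3 is also correct (the computation with $\delta(\sum_j\omega_j)=(2\pi)^{-2}\int e^{ix\cdot\sum_j\omega_j}\,\mathrm{d}x$ checks out). But Step 3 does not prove anything: it reformulates the conjecture as positive semi-definiteness of an infinite family of explicit Hermitian matrices and then names two possible strategies without carrying either one out. You say so yourself (``this is the genuine obstacle''), so the gap is not hidden, but the bottom line is that the entire positivity argument --- the actual content of the conjecture --- is missing. A reduction to an equivalent open problem is not a proof.

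One further inaccuracy worth flagging: your closing claim that in the localised case ``$t^2-1$ is bounded below by a small negative constant on the support and this analytic route closes'' misrepresents why the localised theorem works. When $f$ is supported near a pair of antipodal points, the sum $\omega_1+\omega_2+\omega_3$ can land near the configurations with $t\approx 1$, where $t^2-1$ vanishes to second order in the angular deviation --- but so does the competing positive contribution. The paper's proof of \cref{thm main} is a genuinely quantitative comparison of two quantities that both degenerate quadratically: a lower bound $m(\theta)\geq 30\lvert\theta\rvert^2$ on the diagonal term (\cref{lem I est}, which requires the logarithmic asymptotics of $\sigma*\sigma*\sigma$ near radius $1$ and the sharp inequality of \cref{auxilliary4}) against an upper bound of $\tfrac{101}{100}\pi\lvert\theta\rvert^2$ for each of the nine off-diagonal terms (\cref{lem II est}). ``$t^2-1$ is only slightly negative'' is nowhere near sufficient to close that comparison, so even your heuristic for the special case that \emph{is} proved in the paper is too optimistic.
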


\Cref{conjecture} has been verified for all functions with Fourier modes up to degree $120$ in \cite{OeS+2022} and \cite{Barker+2020}, via a numerical computation of the eigenvalues of $Q$ on the finite dimensional space of such functions. Further, using different methods, in \cite{Ciccone+2022} the conjectured sharp form of inequality \eqref{main ineq} has been established for certain infinite dimensional subspaces of $L^2(\sigma)$ with constrained Fourier support. Our main result establishes \cref{conjecture} for functions with localized spatial support.

Let $C$ be the cylinder of radius $\varepsilon$ centered at the line $\R (1,1,1)$, and define
$$
    V_\varepsilon := \bigg\{f \in V \, : \, \supp f(e^{i\theta_1}, e^{i\theta_2}, e^{i\theta_3}) \subset \bigcup_{k \in \pi \mathbb{Z}^3} k + C\bigg\}.
$$

\begin{theorem}
    \label{thm main}
    Let $\varepsilon = 1/20$. Then for all $f \in V_{\varepsilon}$ it holds that $Q(f) \geq 0$.
\end{theorem}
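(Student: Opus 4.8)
The plan is to localise the problem near a pair of antipodal points, where the weight $w(\omega):=|\omega_1+\omega_2+\omega_3|^2-1$ becomes either comfortably positive or of size $O(\varepsilon^2)$, and then to play the positive part off against the small part.

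\emph{Reductions.} Since $\mathrm d\Sigma$ and $w$ are invariant under permutations of $(\omega_1,\omega_2,\omega_3)$, permutations of $(\omega_4,\omega_5,\omega_6)$, the exchange of the two triples, the global sign flip $\omega_j\mapsto-\omega_j$ and simultaneous rotations, and since $|\omega_1+\omega_2+\omega_3|=|\omega_4+\omega_5+\omega_6|$ on $\operatorname{supp}\mathrm d\Sigma$, one may take $f$ real and $S_3$-symmetric and use the square-difference representation $Q(f)=\tfrac12\int w(\omega)\,(f(\omega_1,\omega_2,\omega_3)-f(\omega_4,\omega_5,\omega_6))^2\,\mathrm d\Sigma(\omega)$. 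The condition defining $V_\varepsilon$ only forces $\theta_1\approx\theta_2\approx\theta_3\pmod\pi$ around a common, free residue; decomposing $f$ by a partition of unity in this residue, observing that $\sum_j\omega_j=0$ forces the two triples to share the same residue (so the cross terms vanish), and rotating, one reduces to the case where $f$ is supported within an $O(\varepsilon)$-ball of a point of $\pi\mathbb Z^3$, i.e.\ all six $\omega_j$ lie within $O(\varepsilon)$ of $\{+1,-1\}$.

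\emph{Combinatorics.} Write $\omega_j=\sigma_j a_j$ with $\sigma_j\in\{\pm1\}$ and $a_j=e^{i\varepsilon\tilde\eta_j}$ within $O(\varepsilon)$ of $1$. The constraint forces $\sum_j\sigma_j=0$, i.e.\ exactly three $\sigma_j=+1$; by antipodality $f(\omega_1,\omega_2,\omega_3)=\psi(a_1,a_2,a_3)$ with $\psi$ a fixed symmetric function on a neighbourhood of $(1,1,1)$ independent of the signs, so $(f(\omega_1,\omega_2,\omega_3)-f(\omega_4,\omega_5,\omega_6))^2=(\psi(a_1,a_2,a_3)-\psi(a_4,a_5,a_6))^2$ is the same for every admissible sign pattern, while $w=|\sigma_1a_1+\sigma_2a_2+\sigma_3a_3|^2-1$ depends only on the signs of the first triple. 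Grouping the $\binom63$ sign patterns into the $10$ unordered partitions $\{S,S^c\}$, $|S|=3$, exactly one partition, $\{\{1,2,3\},\{4,5,6\}\}$, splits $\{1,2,3\}$ monochromatically — there $w\ge 8-6\varepsilon^2\ge 7$ — and on each of the other nine, $\{1,2,3\}$ is a $2$–$1$ split with minority index $k$ and $w=\varepsilon^2\bigl(6\tilde y_k^2-|\tilde y|^2\bigr)+O(\varepsilon^4)$, where $\tilde y$ is the mean-zero part of $(\tilde\eta_1,\tilde\eta_2,\tilde\eta_3)$. Hence $Q(f)=Q_{\mathrm{main}}(f)+Q_{\mathrm{err}}(f)$ with $Q_{\mathrm{main}}(f)\ge 7\int_{\{a_1+a_2+a_3=a_4+a_5+a_6\}}(\psi(a_1,a_2,a_3)-\psi(a_4,a_5,a_6))^2\,\mathrm d\mu_0\ge0$, and — using the $S_3\times S_3$ symmetry together with the symmetry of $\psi$ to collapse the nine split integrals to one copy over a single split constraint manifold $\Gamma$ — $Q_{\mathrm{err}}(f)=9\int_{\Gamma}\varepsilon^2\bigl(6\tilde y_1^2-|\tilde y|^2\bigr)(\psi(a_1,a_2,a_3)-\psi(a_4,a_5,a_6))^2\,\mathrm d\mu+(\text{errors})$, an integral of a sign-changing weight of size $O(\varepsilon^2)$.

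\emph{Core estimate and obstacle.} It remains to show $Q_{\mathrm{main}}(f)\ge|Q_{\mathrm{err}}(f)|$ for $\varepsilon=1/20$. I would pass to the flattened, $\varepsilon$-rescaled picture (replacing the two arcs of $S^1$ by their osculating parabolas), in which $\Gamma$ and $\{a_1+a_2+a_3=a_4+a_5+a_6\}$ become explicit quadric submanifolds of $\R^6$ cut out by $\tilde\eta_1+\tilde\eta_2+\tilde\eta_6=\tilde\eta_3+\tilde\eta_4+\tilde\eta_5$ and $\tilde\eta_1^2+\tilde\eta_2^2+\tilde\eta_6^2=\tilde\eta_3^2+\tilde\eta_4^2+\tilde\eta_5^2$ (resp.\ the symmetric relations), and then compare the two: bound the ratio of the induced measures and of the difference quotients of $\psi$ across the two manifolds — alternatively, identify the rescaled model form as a perturbation of the second variation, at its Gaussian extremiser, of the sharp $L^6$ extension inequality for the union of two antipodal parabolas on even functions, which is non-negative. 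The hard part is precisely this step. On the one hand $Q_{\mathrm{main}}$ has a large kernel — any $\psi$ depending only on the mean and the radius of the mean-zero part of its three phases makes $Q_{\mathrm{main}}(f)=0$ — so one cannot dominate $Q_{\mathrm{err}}$ by $Q_{\mathrm{main}}$ term by term, and must also exploit the cancellation encoded in $\tfrac13\sum_{k}\bigl(6\tilde y_k^2-|\tilde y|^2\bigr)=|\tilde y|^2\ge0$ (the weight averaged over the minority index), which is obstructed by the fact that the nine split partitions carry nine genuinely different constraints; on the other hand the weight is truly negative on a positive-measure subset of $\Gamma$, so the bound is not pointwise. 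Making the comparison between the split and aligned manifolds quantitative enough to beat the $O(\varepsilon^2)$-versus-$7$ discrepancy, while controlling the $O(\varepsilon^4)$ corrections coming from the curvature of $S^1$ beyond the parabola and from solving the exact constraint $\sum_j\omega_j=0$ rather than its quadratic truncation, is the technical heart, and the value $\varepsilon=1/20$ records how tightly those constants must be tracked.
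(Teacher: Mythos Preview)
Your sign-pattern decomposition is correct and corresponds exactly to the paper's splitting into the four balls $B_1,\dots,B_4$ on the transverse slice; your observation $Q_{\mathrm{main}}\ge 0$ is the paper's one-line estimate $I_1\ge I\!I_{11}$. The genuine gap is the ``core estimate''. You propose to prove $Q_{\mathrm{main}}\ge |Q_{\mathrm{err}}|$, and you yourself identify the obstruction: $Q_{\mathrm{main}}$ vanishes on every $\psi$ that depends only on $a_1+a_2+a_3$, while $Q_{\mathrm{err}}$ does not vanish there. That inequality is therefore false, and no amount of bookkeeping in the ``$O(\varepsilon^2)$ versus $7$'' comparison can rescue it; the parabola model you allude to at the end has the same kernel and the same obstruction.

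The paper never plays $Q_{\mathrm{err}}$ off against $Q_{\mathrm{main}}$. It proves directly that $Q_{\mathrm{err}}\ge 0$, and the mechanism is one your outline does not mention: the \emph{logarithmic} singularity of the triple self-convolution $\rho(r)=\sigma*\sigma*\sigma$ at $r=1$. One abandons the square-difference form and writes the split contribution as a diagonal piece $\int m(\theta)\lvert f(\theta)\rvert^2$ minus a cross piece $\sum_{2\le i,j\le 4} I\!I_{ij}$. The diagonal multiplier is $m(\theta)=\tfrac{1}{2\sqrt3}\sum_{j=2}^4(a(\theta+c_j)-1)\,\rho(\sqrt{a(\theta+c_j)})$; since $\rho(1+h)=-6\log\lvert h\rvert+O(1)$, the very sign identity you wrote down, $\sum_k(6\tilde y_k^2-\lvert\tilde y\rvert^2)=3\lvert\tilde y\rvert^2$, turns the sum into a positive term of order $\lvert\theta\rvert^2\log(1/\lvert\theta\rvert)$, which the paper makes explicit as $m(\theta)\ge 30\lvert\theta\rvert^2$ for $\lvert\theta\rvert\le 1/20$. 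The nine cross terms $I\!I_{ij}$ are each bounded by $\tfrac{101}{100}\pi\int\lvert\theta\rvert^2\lvert f\rvert^2$ with \emph{no} logarithm, via a perturbed Cauchy--Schwarz in polar coordinates on $B_i\times B_j$. So $Q_{\mathrm{err}}$ dominates itself by a factor $\log(1/\varepsilon)$; the monochromatic part plays no role in this. In the square-difference representation this is invisible, because the factor $(\psi-\psi')^2$ fuses the diagonal and cross pieces and so hides the logarithm; the whole point is to unfuse them.

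A secondary issue: your partition-of-unity reduction to ``$f$ supported near a single point of $\pi\Z^3$'' is not quite right. Functions in $V_\varepsilon$ are allowed to live along the entire diagonal cylinder, and the constraint $\sum_j\omega_j=0$ only pins the diagonal residues of the two triples to within $O(\varepsilon)$ of each other modulo $\pi$, not exactly; localising by a spatial partition of unity also breaks antipodality. The paper handles this by first decomposing $V_\varepsilon$ into the rotation eigenspaces $X_d$ (which are exactly $B$-orthogonal), then integrating out the common rotation via \cref{lem dimred}, and only afterwards passing to $\lvert f\rvert$, which is genuinely $\Lambda$-periodic on the transverse slice and supported on the four balls.
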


Note that since constant functions are in the kernel of $Q$, the same result holds for $V_\varepsilon \oplus \langle \mathbf{1} \rangle$, where $\mathbf{1}$ is the constant $1$ function.

As a corollary, functions with support sufficiently close to a pair of antipodal points satisfy \eqref{main ineq} with the conjectured sharp 
constant.  Define $\Phi(g) := \|\widehat{g\sigma}\|_{L^6(\R^2)} / \|g\|_{L^2(\sigma)}$.

\begin{corollary}
    \label{cor main}
    Let $\varepsilon' = \sqrt{3/8}\varepsilon$.
    Suppose that $g \in L^2(\sigma)$ is such that  $g(e^{i\theta})$ is supported in $( - \varepsilon', \varepsilon') + \pi \mathbb{Z}$. Then $\Phi(g) \leq \Phi(\mathbf{1})$, where $\mathbf{1}$ is the constant $1$ function on $S^1$.
\end{corollary}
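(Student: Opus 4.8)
The plan is to derive \cref{cor main} from \cref{thm main} by feeding it into the reduction carried out in \cite{Carneiro+2017}. Recall the structure of that reduction. First, a symmetrization step: replacing $g$ successively by $|g|$ and then by the even function $g_\flat$ with $g_\flat(e^{i\theta})^2 = \tfrac12\big(|g(e^{i\theta})|^2 + |g(e^{-i\theta})|^2\big)$ leaves $\|g\|_{L^2(\sigma)}$ unchanged and does not decrease $\|\widehat{g\sigma}\|_{L^6(\R^2)}$ (the circle analogue of Foschi's antipodal symmetrization, cf.\ \cite{Foschi2015}); hence the deficit $\Phi(\mathbf 1)^6\|g\|_{L^2(\sigma)}^6 - \|\widehat{g\sigma}\|_{L^6(\R^2)}^6$ is at least the corresponding deficit for $g_\flat$, and it suffices to treat $g_\flat \ge 0$ even. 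Second, a Foschi-type computation rewrites the deficit for $g_\flat$ as a fixed positive multiple of $Q(f)$, where $f := g_\flat \otimes g_\flat \otimes g_\flat$, i.e.\ $f(\omega_1,\omega_2,\omega_3) = g_\flat(\omega_1)g_\flat(\omega_2)g_\flat(\omega_3)$; note $f \in V$ since $g_\flat$ is real-valued and even. Consequently $\Phi(g) \le \Phi(\mathbf 1)$ will follow once I check that $f \in V_\varepsilon$ and apply \cref{thm main}.

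The next step is to track supports through the above. The set $(-\varepsilon',\varepsilon')+\pi\Z$ is invariant under $\theta \mapsto -\theta$, so $\supp g_\flat \subseteq (-\varepsilon',\varepsilon')+\pi\Z$ as well, and therefore $f(e^{i\theta_1},e^{i\theta_2},e^{i\theta_3})$ is supported in $\big((-\varepsilon',\varepsilon')+\pi\Z\big)^3 = \bigcup_{k\in\pi\Z^3}\big(k+(-\varepsilon',\varepsilon')^3\big)$. It then remains to verify the purely geometric containment $(-\varepsilon',\varepsilon')^3 \subseteq C$; granting this, $k+(-\varepsilon',\varepsilon')^3 \subseteq k+C$ for every $k$ and hence $f \in V_\varepsilon$. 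For the containment I would use the identity
\[
    \operatorname{dist}\!\big((\theta_1,\theta_2,\theta_3),\,\R(1,1,1)\big)^2
    = (\theta_1^2+\theta_2^2+\theta_3^2) - \tfrac13(\theta_1+\theta_2+\theta_3)^2
    = \tfrac13\big((\theta_1-\theta_2)^2+(\theta_2-\theta_3)^2+(\theta_3-\theta_1)^2\big).
\]
Its right-hand side is convex in $(\theta_1,\theta_2,\theta_3)$, so over the closed cube $[-\varepsilon',\varepsilon']^3$ it is maximized at a vertex; checking the eight vertices, the maximum equals $\tfrac83\varepsilon'^2$ and is attained only at the six vertices with two coordinates equal and the third opposite. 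Since $\varepsilon' = \sqrt{3/8}\,\varepsilon$ gives $\tfrac83\varepsilon'^2 = \varepsilon^2$, and the maximum is not attained in the interior, one gets $\operatorname{dist}((\theta_1,\theta_2,\theta_3),\R(1,1,1)) < \varepsilon$ everywhere on the open cube $(-\varepsilon',\varepsilon')^3$, so it lies inside $C$. (If $C$ is understood as the closed cylinder, the containment holds a fortiori.)

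Assembling the pieces: $f \in V_\varepsilon$, hence $Q(f) \ge 0$ by \cref{thm main}, hence the deficit for $g_\flat$ is nonnegative, hence so is that for $g$, giving $\Phi(g) \le \Phi(\mathbf 1)$. I expect the main point requiring care to be the bookkeeping of the first paragraph: one must confirm that the reduction of \cite{Carneiro+2017}, run for a single function $g$ with localized support, invokes \cref{conjecture} only through the single inequality $Q(f) \ge 0$, and that the symmetrizations involved preserve the support hypothesis. The geometric step is elementary, but it is what pins down the constant $\varepsilon' = \sqrt{3/8}\,\varepsilon$: this is precisely the largest factor by which an axis-parallel cube centred on $\R(1,1,1)$ can be dilated while still fitting inside the radius-$\varepsilon$ cylinder, the binding vertices being those of signature $(+,+,-)$ (with $\varepsilon = 1/20$ this is the $\sqrt6/80$-neighbourhood of a pair of antipodal points announced in the abstract).
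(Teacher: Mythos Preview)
Your approach is the same as the paper's: symmetrize via \cite{Carneiro+2017}, check that the tensor cube of the symmetrized function lands in $V_\varepsilon$, and invoke \cref{thm main}. Your computation of the cube-in-cylinder containment is correct and more explicit than the paper's one-line claim that $f\in V_{\sqrt{8/3}\varepsilon'}=V_\varepsilon$.

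Two points need correcting. First, your symmetrization is the wrong one: you set $g_\flat(e^{i\theta})^2=\tfrac12(|g(e^{i\theta})|^2+|g(e^{-i\theta})|^2)$, which makes $g_\flat$ invariant under complex conjugation, not under the antipodal map $\omega\mapsto-\omega$. The correct formula (the paper's $g_\#$) uses $g(-e^{i\theta})=g(e^{i(\theta+\pi)})$; otherwise $f=g_\flat^{\otimes 3}$ need not lie in $V$, and \cref{thm main} does not apply. The support set $(-\varepsilon',\varepsilon')+\pi\Z$ is also invariant under $\theta\mapsto\theta+\pi$, so your support-tracking survives the fix.

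Second, the deficit is \emph{not} a positive multiple of $Q(f)$; the reduction in \cite{Carneiro+2017} has further steps. What actually happens is that $Q(f)\ge 0$ verifies their Conjecture~1.4 for $g_\#$, and then Steps~3--5 of that paper (which are proved there unconditionally, Step~5 being their main theorem) carry this to $\Phi(g_\#)\le\Phi(\mathbf 1)$. Your caveat in the final paragraph is exactly right: one must go back to \cite{Carneiro+2017} and check that the only place the reduction invokes their conjecture is through the single inequality $Q(f)\ge 0$. That is the case, so the argument closes once you state it accurately.
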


Note that by rotation symmetry, the same holds when $g(e^{i\theta})$ is supported in $I + \pi \mathbb{Z}$ for any interval $I$ of length $2\varepsilon'$.

The constants $\varepsilon$ and $\varepsilon'$ in \cref{thm main} and \cref{cor main} are not optimal. Numerical computations suggest that with our method $\varepsilon$ can be improved up to about $0.104$ and $\varepsilon'$ up to about $0.063$, see \cref{sec discussion}. 

The numerical results in \cite{Barker+2020} suggest that eigenfunctions of $Q$ on the subspace of functions with Fourier modes up to degree $N$ corresponding to small eigenvalues concentrate in space. \Cref{thm main} shows that $Q$ is positive on all such sufficiently concentrated functions, thus it should be a useful partial result in establishing positive semi-definiteness of $Q$ on the full space of antipodal functions.
A more precise observation by Jiaxi Cheng, a graduate student in Bonn, is that the smallest eigenvalue is of size $\sim N^{-2}\log(N)$, see Section 2 of \cite{Negro+2022}. The existence of such an eigenvalue is also explained by the asymptotic formula for the multiplier $m$ in \cref{lem I est}, which looks like $c\lvert \log\lvert x\rvert\rvert \lvert x\rvert^2$ near $0$. Unfortunately, we cannot prove that this is the smallest eigenvalue.

More generally, the topic of sharp Fourier extension inequalities has attracted a lot of interest in recent years. In the following we consider general dimensions $d\geq 2$.  Then the Tomas-Stein extension inequality states that for every $q \geq q_d := 2(d+1)/(d-1)$, there exists $C(d,q) > 0$ such that for all $f \in L^2(S^{d-1}, \sigma^{d-1})$
\begin{equation}
    \label{eq gen TomasStein}
    \|\widehat{f\sigma}\|_{L^q(\R^d)} \leq C(d, q) \|f\|_{L^2(\sigma)}\,.
\end{equation}
Here $\sigma^{d-1}$ denotes the $d-1$-dimensional Hausdorff measure on $S^{d-1}$.

It is known that extremizers for \eqref{eq gen TomasStein} exist when $q > q_d$, for all $d$, see \cite{Fanelli+2010}. At the endpoint $q = q_d$, existence and smoothness of extremizers have been shown for $d = 3$ in \cite{Christ+2012}, \cite{Christ+2012b} and for $d = 2$ in \cite{Shao2016}, \cite{Shao2017}. For higher dimensional spheres $d \geq 4$, existence of extremizers for $q = q_d$ is known conditional on the conjecture that Gaussians maximize the corresponding extension inequality for the paraboloid, see \cite{Frank+2016}.

For certain specific choices of $(d,q)$, a full characterization of the extremizers of \eqref{eq gen TomasStein} is known. Most such results grew out of the work of Foschi \cite{Foschi2015}, who showed that constant functions maximize \eqref{eq gen TomasStein} for $(d,q) = (2, 4)$, and gave a full characterization of all complex valued maximizers. His method can be adapted for some non-endpoint extension inequalities on higher dimensional spheres, see \cite{Car+2015}. Using different methods, maximizers of \eqref{eq gen TomasStein} for some choices of $(d,q)$ with even $q > 4$ are characterized in \cite{OeS+2021}.
In some further cases it is known that constant functions are local maximizers. This was shown in \cite{Carneiro+2017} for $(d, q)= (2,6)$, and in \cite{Goncalves+2022} for  $(d, q_d)$ with $3 \leq d \leq 60$. 
For further background and references on sharp Fourier extension inequalities we refer to \cite{Foschi+2017} and \cite{Negro+2022}.

\subsection*{Acknowledgement}
I am grateful to Christoph Thiele for introducing me to this problem and for many helpful discussions, and to Jan Holstermann for pointing out the short proof of \cref{auxilliary4}.
The author was supported by the Collaborative Research Center 1060 funded by the Deutsche
Forschungsgemeinschaft (DFG, German Research Foundation) and the Hausdorff Center for
Mathematics, funded by the DFG under Germany’s Excellence Strategy - GZ 2047/1, ProjectID 390685813.

\section{Proof of \texorpdfstring{\cref{cor main}}{Corollary 3}}
\Cref{cor main} is a direct consequence of \cref{thm main} and the program formulated in \cite{Carneiro+2017}. We give a brief sketch of the implication here, for the details of the program and proofs we refer the reader to \cite{Carneiro+2017}.

\begin{proof}[Proof of \cref{cor main}]
    Let $g \in L^2(\sigma)$ such that $g(e^{i\theta})$ is supported in $(-\varepsilon', \varepsilon') + \pi \mathbb{Z}$. Define $\tilde g(x) = g(-x)$ and 
    $$
        g_\# = \sqrt{\frac{\lvert g\rvert^2 + \lvert \tilde g\rvert^2}{2}}\,.
    $$
    As shown in \cite{Carneiro+2017}, Step 1 and 2, it holds that $\Phi(g) \leq \Phi(g_\#)$,
    and $g_\#$ is antipodal and $g_\#(e^{i\theta})$ is supported in $(-\varepsilon_2, \varepsilon_2) + \pi \mathbb{Z}$.
    Define $f(\omega_1, \omega_2, \omega_3) := g_\#(\omega_1)g_\#(\omega_2)g_\#(\omega_3)$. Then 
    $f \in V_{\sqrt{8/3}\varepsilon'} = V_{\varepsilon}$, 
    hence $Q(f) \geq 0$, by \cref{thm main}. This verifies Conjecture 1.4 in \cite{Carneiro+2017} for $g_\#$. Using Step 3, 4 and 5 in \cite{Carneiro+2017}, we conclude that $\Phi(g) \leq \Phi(1)$.
\end{proof}

As written, this proof depends on Theorem 1.2 in \cite{Carneiro+2017}, which is the main result of that paper and verifies Step 5. However,  this step has a shorter proof if one assumes that the functions have small support, as we do. 
The step consists in proving the inequality
\begin{multline}
    \int h(\omega_1) h(\omega_2) h(\omega_3) (\lvert\omega_1 + \omega_2 + \omega_3\rvert^2 - 1) \, \sigma * \sigma * \sigma(\omega_1 + \omega_2 + \omega_3) \, \mathrm{d}\sigma(\omega_1) \, \mathrm{d}\sigma(\omega_2) \, \mathrm{d}\sigma(\omega_3) \\
    \leq \frac{\|h\|_{L^1(\sigma)}^3}{\|\mathbf{1}\|_{L^1(\sigma)}^3} \int  (\lvert\omega_1 + \omega_2 + \omega_3\rvert^2 - 1) \, \sigma * \sigma * \sigma(\omega_1 + \omega_2 + \omega_3) \, \mathrm{d}\sigma(\omega_1) \, \mathrm{d}\sigma(\omega_2) \, \mathrm{d}\sigma(\omega_3)\,, \label{eq step5}
\end{multline}
valid for all antipodal $h \in L^1(\sigma)$, which is then applied to $h = g^2$. Under the assumptions of \cref{cor main}, $g$ and therefore also $h$ are supported in $( -\varepsilon', \varepsilon') + \pi \mathbb{Z}$. Then \eqref{eq step5} follows from the estimate 
\begin{multline*}
    \max_{[-\varepsilon', \varepsilon']^3} \frac{1}{8} \sum_{\gamma \in \{-1,1\}^3} (\lvert\gamma_1 e^{i\theta_1} + \gamma_2 e^{i\theta_2} + \gamma_3 e^{i\theta_3}\rvert^2 - 1)\, \sigma * \sigma * \sigma(\gamma_1 e^{i\theta_1} + \gamma_2 e^{i\theta_2} + \gamma_3 e^{i\theta_3}) \\
    \leq \frac{1}{\|\mathbf{1}\|_{L^1(\sigma)}^3} \int  (\lvert \omega_1 + \omega_2 + \omega_3\rvert^2 - 1) \, \sigma * \sigma * \sigma(\omega_1 + \omega_2 + \omega_3) \, \mathrm{d}\sigma(\omega_1) \, \mathrm{d}\sigma(\omega_2) \, \mathrm{d}\sigma(\omega_3)\,,
\end{multline*}
which can be verified using \cref{AsymptoticsRho}.

\section{Proof of \texorpdfstring{\cref{thm main}}{Theorem 2}}

\subsection{Orthogonal decomposition}
We consider the sesquilinear form 
$$
    B(f,g) = \int_{(S^1)^6} (\lvert\omega_1+ \omega_2 + \omega_3\rvert^2 - 1) (f(\omega_1, \omega_2, \omega_3)\overline{g(\omega_1,\omega_2,\omega_3)}- f(\omega_1, \omega_2, \omega_3) \overline{g(\omega_4, \omega_5,\omega_6)}) \, \mathrm{d}\Sigma(\omega)\,.
$$
By a change of variables, it holds that $B(f,g) = B(R f, R g)$, where $R f(\omega_1 ,\omega_2, \omega_3) = f(e^{i} \omega_1, e^{i} \omega_2, e^{i} \omega_3)$. Define $Z_d = \{(k_1, k_2, k_3) \in (2\mathbb{Z})^3 \, : \, k_1 + k_2 + k_3 = d\}$ and 
$$
    X_d = \{\sum_{k \in Z_d} a_k \omega_1^{k_1} \omega_2^{k_2} \omega_3^{k_3}  \ : \ a_k \in \ell^2(Z_d)\} \subset L^2((S^1)^3)\,.
$$
For $d \neq d'$, the spaces $X_d$ and $X_{d'}$ are eigenspaces of $R$ with different eigenvalues, and hence are orthogonal with respect to $B$. 
Note that the orthogonal projection $\pi_d$ onto $X_d$ can be expressed as 
$$
    \pi_d(f)(\omega_1, \omega_2, \omega_3) = \int_0^{1} e^{-2\pi idt} f(e^{2\pi it} \omega_1, e^{2\pi it} \omega_2, e^{2 \pi it}\omega_3) \, \mathrm{d}t\,,
$$
which implies that $\pi_d(V_\varepsilon) \subset V_\varepsilon$. Therefore, we have that
$$
    V_\varepsilon = \overline{\bigoplus_{d \in \mathbb{Z}} \pi_d(V_\varepsilon)} = \overline{\bigoplus_{d \in \mathbb{Z}} (V_\varepsilon \cap X_d)}\,.
$$
Hence, it suffices to show positive semi-definiteness of $B$ on each of the spaces $X_{d, \varepsilon} := V_\varepsilon \cap X_d$.

\subsection{Reducing the dimension}
\label{RedDim}
We now integrate out simultaneous rotations of all $\omega_i$ by the same angle. From now on, we use the convention that 
\begin{equation}
    \label{conv1}
    \omega_i = (\cos(\theta_i), \sin(\theta_i))\,, 
\end{equation}
and abuse notation by writing $f(\omega(\theta)) = f(\theta)$. We also define
\begin{align*}
    a(\theta_1, \theta_2, \theta_3) &:=(\cos(\theta_1) + \cos(\theta_2) + \cos(\theta_3))^2 + (\sin(\theta_1) + \sin(\theta_2) + \sin(\theta_3))^2= \lvert \omega_1 + \omega_2 + \omega_3\rvert^2\,,
\end{align*}
so that the weight in the bilinear form $B$ is given by $a - 1$, and record the useful identity
\begin{equation}
    \label{eq a cos}
    a(\theta_1, \theta_2, \theta_3) = 3 + 2 \cos(\theta_1 - \theta_2) + 2 \cos(\theta_2 - \theta_3)+ 2 \cos(\theta_3 - \theta_1)\,.
\end{equation}

\begin{lemma}
    \label{FundamentalDomain}
    Let $C$ be the convex hull of the four points 
    $$
        (\pi,-\pi,0)\,, \quad (-\frac{\pi}{3}, -\frac{\pi}{3}, \frac{2\pi}{3})\,,\quad (-\pi, \pi,0)\,,\quad (\frac{\pi}{3}, \frac{\pi}{3}, -\frac{2\pi}{3})\,.
    $$
    Then the prism $P$ over $C$ of height $2 \pi \sqrt{3}$ is a fundamental domain for  $\mathbb{R}^3 / (2\pi\mathbb{Z})^3$.
\end{lemma}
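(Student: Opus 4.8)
The plan is to reduce this three-dimensional tiling statement to a two-dimensional one by factoring out the $(1,1,1)$-direction, along which the prism is translation-invariant. First I record the geometry of $C$: each of the four listed vertices satisfies $x_1+x_2+x_3=0$, so $C$ lies in the plane $W:=\{x\in\mathbb{R}^3:x_1+x_2+x_3=0\}$, which is orthogonal to $v_0:=(2\pi,2\pi,2\pi)$. Setting $u:=\frac{2\pi}{3}(-2,1,1)$ and $v:=\frac{2\pi}{3}(-1,2,-1)$, a direct check shows that the four vertices are exactly $\pm\tfrac{1}{2}(u+v)$ and $\pm\tfrac{1}{2}(u-v)$, so $C$ is the centered parallelogram $\{su+tv:s,t\in[-\tfrac{1}{2},\tfrac{1}{2}]\}$. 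Since $|v_0|=2\pi\sqrt{3}$ and $\mathbb{R}v_0=W^\perp$, extruding $C$ orthogonally to $W$ over a length $2\pi\sqrt{3}$ gives exactly the Minkowski sum $P=C+[0,1]\,v_0$.

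\emph{Reduction to $W$.} Let $p_W\colon\mathbb{R}^3\to W$ be orthogonal projection; since $v_0\perp W$ one has $p_W(v_0)=0$, and every $x$ decomposes uniquely as $x=p_W(x)+\tau(x)\,v_0$ with $\tau(x)\in\mathbb{R}$ depending linearly on $x$ and $\tau(v_0)=1$. Put $\Lambda_W:=p_W((2\pi\mathbb{Z})^3)$. I claim (a) $(2\pi\mathbb{Z})^3\cap\mathbb{R}v_0=\mathbb{Z}v_0$, and (b) $C$ is a fundamental domain for $W/\Lambda_W$. Granting these: given $x\in\mathbb{R}^3$, choose $\ell\in\Lambda_W$ with $p_W(x)-\ell\in C$ and lift it to $m\in(2\pi\mathbb{Z})^3$ with $p_W(m)=\ell$, so $p_W(x-m)\in C$; then choose $n\in\mathbb{Z}$ with $\tau(x-m)-n\in[0,1)$, which gives $x-(m+nv_0)=p_W(x-m)+(\tau(x-m)-n)v_0\in C+[0,1)v_0\subseteq P$ with $m+nv_0\in(2\pi\mathbb{Z})^3$. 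Hence the translates $P+k$, $k\in(2\pi\mathbb{Z})^3$, cover $\mathbb{R}^3$. For disjointness of interiors, suppose $p,p'$ lie in the interior of $P$ and in the same $(2\pi\mathbb{Z})^3$-coset; write $p=c+sv_0$, $p'=c'+s'v_0$ with $c,c'$ in the interior of $C$ and $s,s'\in(0,1)$. Applying $p_W$ to $p-p'\in(2\pi\mathbb{Z})^3$ gives $c-c'\in\Lambda_W$, so $c=c'$ by (b), and then $p-p'=(s-s')v_0\in\mathbb{Z}v_0$ by (a) forces $s=s'$. Thus $P$ is a fundamental domain.

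\emph{Proof of (a) and (b).} Claim (a) is immediate, since $tv_0\in(2\pi\mathbb{Z})^3$ iff $t\in\mathbb{Z}$. For (b), compute the projections of the scaled standard basis vectors of $\mathbb{R}^3$: $p_W(2\pi e_1)=\frac{2\pi}{3}(2,-1,-1)=-u$, $p_W(2\pi e_2)=\frac{2\pi}{3}(-1,2,-1)=v$, and $p_W(2\pi e_3)=\frac{2\pi}{3}(-1,-1,2)=u-v$. Hence $u$ and $v$ generate $\Lambda_W$, and being linearly independent they form a basis of $\Lambda_W$. Therefore $C=\{su+tv:s,t\in[-\tfrac{1}{2},\tfrac{1}{2}]\}$ is a translate of the strict fundamental parallelogram $\{su+tv:s,t\in[0,1)\}$ of $\Lambda_W$, so its $\Lambda_W$-translates cover $W$ with disjoint interiors, which is (b).

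\emph{Main obstacle.} The computations are all elementary; the only point requiring care is the bookkeeping in the reduction, namely verifying via (a) that the splitting $\mathbb{R}^3=W\oplus\mathbb{R}v_0$ is compatible with the lattice, so that the $W$- and $v_0$-components may be reduced independently. One can avoid the explicit bijection entirely by instead computing $\operatorname{vol}(P)=|\det(u\mid v\mid v_0)|=(2\pi)^3$, which equals the covolume of $(2\pi\mathbb{Z})^3$, proving only the covering half above, and then concluding that almost every point is covered exactly once by integrating the covering multiplicity over $\mathbb{R}^3/(2\pi\mathbb{Z})^3$.
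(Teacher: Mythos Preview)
Your proof is correct and follows the same overall strategy as the paper: both factor out the diagonal direction $\mathbb{R}(1,1,1)$ and reduce to a two-dimensional lattice statement in the hyperplane $W=\{x_1+x_2+x_3=0\}$. The difference is in the choice of two-dimensional lattice. The paper works with the \emph{intersection} lattice $\Lambda_0=(2\pi\mathbb{Z})^3\cap W$, generated by $(2\pi,-2\pi,0)$ and $(-2\pi,0,2\pi)$; its fundamental parallelogram $C_1$ has three times the area of $C$, so the paper first obtains the short, wide prism $C_1+\{(t,t,t):t\in[0,\tfrac{2\pi}{3})\}$ and then invokes a ``cut and rearrange'' step to reach $P$. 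You instead work with the \emph{projected} lattice $\Lambda_W=p_W((2\pi\mathbb{Z})^3)$, for which $C$ itself is already a fundamental parallelogram, and you verify directly that $(2\pi\mathbb{Z})^3\cap\mathbb{R}v_0=\mathbb{Z}v_0$, so the two reductions can be carried out independently. This lands on $P$ without any rearrangement step. Your route is a bit more streamlined; the paper's route has the advantage that the lattice $\Lambda_0$ sits inside $(2\pi\mathbb{Z})^3$ on the nose, making the first fundamental-domain claim more transparent at the cost of the extra rearrangement at the end.
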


\begin{proof}
    For each point in $\R^3/ (2\pi \Z)^3$, there exists a unique representative $(\theta_1, \theta_2, \theta_3) \in \R^3$ such that $0 \leq \theta_1+  \theta_2 + \theta_3 < 2\pi$, and such that the orthogonal projection of $(\theta_1, \theta_2, \theta_3)$ onto the hyperplane $\{(x_1,x_2, x_3) \ : \ x_1 + x_2 + x_3 = 0\}$ is contained in a fixed fundamental domain $C_1$ of the lattice generated by $(2\pi, -2\pi,0)$ and $(-2\pi, 0,2\pi)$, such as
    \begin{align}
        C_1 :=
        \left\{
            \lambda_1
            \begin{pmatrix}
                2\pi\\
                -2\pi\\
                0
            \end{pmatrix}
            +
            \lambda_2
            \begin{pmatrix}
                -2\pi\\
                0\\
                2\pi
            \end{pmatrix}
            \ : \ \lambda_1, \lambda_2 \in [0,1)
        \right\}.
    \end{align}
    Thus $C_1 + \{(t,t,t) \, : \, t \in [0, \frac{2\pi}{3})\}$ is a fundamental domain for $\R^3/(2 \pi \mathbb{Z})^3$.
    By decomposing this fundamental domain into finitely many pieces and rearranging them, it is easy to see that $C + \{(t,t,t) \ : \ t \in [0, 2\pi)\}$ is also a fundamental domain.
\end{proof}

\begin{lemma}
    \label{lem dimred}
    For all $d \in \mathbb{Z}$ there exists a function $\lambda_d: C\times C \to S^1$ such that for all $f \in X_d$ 
    $$
        B(f, f) = 12 \pi \int_{C^2} \delta(a(\theta) - a(\theta')) (a(\theta) - 1)  (\lvert f(\theta)\rvert^2 - \lambda_d(\theta', \theta) f(\theta) \overline{f(\theta')}) \, \mathrm{d}\mathcal{H}^2_C(\theta) \, \mathrm{d}\mathcal{H}^2_C(\theta')\,.
    $$
    Here $\mathcal{H}^2_C$ denotes the $2$-dimensional Hausdorff measure on $C$.
\end{lemma}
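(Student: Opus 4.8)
\emph{Strategy.} The plan is to first apply the standard ``convolution'' reduction which rewrites $B(f,f)$ as the difference of an integral over $(S^1)^3$ and an integral over $\R^2$, and then to evaluate both integrals by slicing the torus $(\R/2\pi\Z)^3$ along the lines $\R(1,1,1)$, using \cref{FundamentalDomain}. The point is that for $f\in X_d$ one has $f(\theta+t(1,1,1))=e^{idt}f(\theta)$; this makes $\lvert f\rvert^2$ (as well as $a$ and the relevant convolution of $\sigma$) constant along these lines, so that the fibre integrals collapse to the $\delta(a(\theta)-a(\theta'))$ kernel in the statement, and it is precisely the phase $e^{idt}$ that produces the unimodular multiplier $\lambda_d$.

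\emph{Step 1: the reduction.} Write $u=\omega_1+\omega_2+\omega_3$. The diagonal term of $B(f,f)$ depends only on $\omega_1,\omega_2,\omega_3$, so integrating out $\omega_4,\omega_5,\omega_6$ against $\delta(\sum_{j=1}^6\omega_j)$ and using that $\sigma$ is invariant under $\omega\mapsto-\omega$ turns it into $\int_{(S^1)^3}(\lvert u\rvert^2-1)\lvert f\rvert^2\,(\sigma*\sigma*\sigma)(u)\prod_{j=1}^3\mathrm d\sigma(\omega_j)$. In the cross term I would insert $\delta(\sum_{j=1}^6\omega_j)=\int_{\R^2}\delta(\omega_1+\omega_2+\omega_3-\xi)\,\delta(\omega_4+\omega_5+\omega_6+\xi)\,\mathrm d\xi$ to separate the two triples of variables, substitute $\omega_j\mapsto-\omega_j$ in the second triple (here one uses that $f\in X_d$ is antipodal, which holds automatically because $X_d$ consists of functions with even Fourier modes), and use that $\lvert u\rvert^2=\lvert\xi\rvert^2$ on the support of the first delta; this rewrites the cross term as $\int_{\R^2}(\lvert\xi\rvert^2-1)\lvert\mathcal Ef(\xi)\rvert^2\,\mathrm d\xi$, where $\mathcal Ef(\xi):=\int_{(S^1)^3}f\,\delta(u-\xi)\prod_{j=1}^3\mathrm d\sigma(\omega_j)$. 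Hence $B(f,f)$ equals the first of these integrals minus the second; this part is standard (cf.\ \cite{Carneiro+2017}).

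\emph{Step 2: the slicing.} I would pass to the angular variables via \eqref{conv1}, so the measures become $\mathrm d\theta$ on $(\R/2\pi\Z)^3$, and invoke \cref{FundamentalDomain}: the prism, which we may take to be $P=C+\{t(1,1,1):t\in[0,2\pi)\}$, is a fundamental domain, and since $C$ lies in the plane orthogonal to $(1,1,1)$ the parametrisation $(\theta_C,t)\mapsto\theta_C+t(1,1,1)$ has Jacobian $\sqrt3$, i.e.\ $\mathrm d\theta=\sqrt3\,\mathrm d\mathcal H^2_C(\theta_C)\,\mathrm dt$. For $f\in X_d$ the integrand $\lvert f\rvert^2$ of the diagonal term is constant along $\R(1,1,1)$, as are $a$ (by \eqref{eq a cos}) and $(\sigma*\sigma*\sigma)(u)$ (since $\sigma*\sigma*\sigma$ is radial and $u$ merely rotates under the translation), so integrating out $t$ produces a factor $2\pi\sqrt3$ and the diagonal term becomes $2\pi\sqrt3\int_C(a(\theta)-1)\lvert f(\theta)\rvert^2\,(\sigma*\sigma*\sigma)(u(\theta))\,\mathrm d\mathcal H^2_C(\theta)$. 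Applying the same slicing to $(\sigma*\sigma*\sigma)(\xi)=\int_{(\R/2\pi\Z)^3}\delta(u-\xi)\,\mathrm d\theta$, together with the distributional identities $\int_0^{2\pi}\delta^{(2)}(e^{it}v-\xi)\,\mathrm dt=\lvert\xi\rvert^{-1}\delta(\lvert v\rvert-\lvert\xi\rvert)$ (with $\delta^{(2)}$ the $\R^2$-delta) and $\delta(\sqrt a-\sqrt{a'})=2\sqrt a\,\delta(a-a')$, I would get $(\sigma*\sigma*\sigma)(\xi)=2\sqrt3\int_C\delta(a(\theta')-\lvert\xi\rvert^2)\,\mathrm d\mathcal H^2_C(\theta')$; substituting $\xi=u(\theta)$ converts the diagonal term into $12\pi\int_{C^2}\delta(a(\theta)-a(\theta'))(a(\theta)-1)\lvert f(\theta)\rvert^2\,\mathrm d\mathcal H^2_C(\theta)\,\mathrm d\mathcal H^2_C(\theta')$. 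For the cross term I would slice $\mathcal Ef$ in the same way: writing $u(\theta_C)$ in polar form with argument $\alpha(\theta_C):=\arg(\omega_1+\omega_2+\omega_3)$ gives $\mathcal Ef(\xi)=\sqrt3\,\lvert\xi\rvert^{-1}e^{id\arg\xi}\int_C f(\theta')\,e^{-id\alpha(\theta')}\,\delta(\sqrt{a(\theta')}-\lvert\xi\rvert)\,\mathrm d\mathcal H^2_C(\theta')$, so $\lvert\mathcal Ef(\xi)\rvert^2$ depends on $\xi$ only through $\lvert\xi\rvert$. Expanding the square, integrating $(\lvert\xi\rvert^2-1)\lvert\mathcal Ef(\xi)\rvert^2$ over $\R^2$ in polar coordinates and carrying out the radial integral (the two deltas force $a(\theta)=a(\theta')$, again via $\delta(\sqrt a-\sqrt{a'})=2\sqrt a\,\delta(a-a')$) yields $12\pi\int_{C^2}\delta(a(\theta)-a(\theta'))(a(\theta)-1)\,e^{id(\alpha(\theta')-\alpha(\theta))}f(\theta)\overline{f(\theta')}\,\mathrm d\mathcal H^2_C(\theta)\,\mathrm d\mathcal H^2_C(\theta')$. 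Setting $\lambda_d(\theta',\theta):=e^{id(\alpha(\theta')-\alpha(\theta))}$, which takes values in $S^1$ and is defined off the $\mathcal H^2_C$-null subset of $C$ where $\omega_1+\omega_2+\omega_3$ vanishes, and subtracting the cross term from the diagonal term gives precisely the stated identity.

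\emph{Main obstacle.} The difficulty is bookkeeping rather than anything conceptual: one must track the one- and two-dimensional delta functions carefully through the rotation-and-translation change of variables so that the constants $\sqrt3$ (from the prism), $\lvert\xi\rvert^{-1}$ (from polar coordinates on $\R^2$), $2\sqrt a$ (from the change of variables $a\leftrightarrow\sqrt a$) and $2\pi$ (from integrating out the rotation angle) combine to exactly $12\pi$ in both the diagonal and the cross term; an off-by-a-constant slip is the most likely error, and it is worth double-checking the final constant against the normalisation $\int_{\R^2}(\sigma*\sigma*\sigma)(\xi)\,\mathrm d\xi=(2\pi)^3$. A secondary, minor point is to verify that $\omega_1+\omega_2+\omega_3=0$ on only an $\mathcal H^2_C$-null subset of $C$ (the equilateral configurations form a one-dimensional set there), so that $\alpha$, and hence $\lambda_d$, are well defined $\mathcal H^2_C$-almost everywhere, which is all the statement requires.
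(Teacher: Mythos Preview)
Your argument is correct, and the core idea---integrate out the $\R(1,1,1)$ direction using \cref{FundamentalDomain} and the relation $f(\theta+t(1,1,1))=e^{idt}f(\theta)$, so that the remaining constraint collapses to $\delta(a(\theta)-a(\theta'))$---is exactly the paper's. The organisation differs, though: the paper keeps the diagonal and cross terms together and slices the full six-fold integral over $P\times P$ directly (using $\delta(x-y)=2\,\delta(\lvert x\rvert^2-\lvert y\rvert^2)\,\delta(\arg x-\arg y)$ and then integrating out one remaining rotation of $(\omega_4,\omega_5,\omega_6)$), whereas you first perform the ``convolution'' reduction to an integral over $(S^1)^3$ plus an $\R^2$-integral of $\lvert\mathcal Ef\rvert^2$, and only then slice each piece over a single three-torus. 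Your route is a step longer but has the advantage of yielding $\lambda_d(\theta',\theta)=e^{id(\alpha(\theta')-\alpha(\theta))}$ explicitly (the paper's proof gives the same formula once one unwinds which value of $t$ is selected by the $\arg$-delta) and of making the link to $\sigma*\sigma*\sigma$---which the paper only exploits later, in \cref{lem I est}---visible already here.
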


\begin{proof}
    By Lemma \ref{FundamentalDomain}, we have
    \begin{multline*}
        B(f,f) = \int_{P \times P} \delta(\omega_1 + \omega_2 + \omega_3 - \omega_4 - \omega_5 - \omega_6) (\lvert \omega_1 + \omega_2 + \omega_3\rvert^2 - 1)\\
        \times (\lvert f(\omega_1, \omega_2, \omega_3)\rvert^2- f(\omega_1, \omega_2, \omega_3) \overline{f(\omega_4, \omega_5,\omega_6)}) \prod_{j = 1}^6 \mathrm{d}\theta_j\
    \end{multline*}
    \begin{multline*}
        = 2\pi \sqrt{3} \int_{C \times P} \delta(\omega_1 + \omega_2 + \omega_3 - \omega_4 - \omega_5 - \omega_6) (\lvert \omega_1 + \omega_2 + \omega_3\rvert ^2 - 1)\\
        \times (\lvert f(\omega_1, \omega_2, \omega_3)\rvert ^2- f(\omega_1, \omega_2, \omega_3) \overline{f(\omega_4, \omega_5,\omega_6)}) \,\mathrm{d}\mathcal{H}^2_C(\theta_1, \theta_2, \theta_3) \, \prod_{j = 4}^6 \mathrm{d}\theta_j\,.
    \end{multline*}
    Here we have used that $f \in X_d$, to integrate out simultaneous rotations of all $6$ points $\omega_j$ by the same angle. For $x,y \in \R^2$, it holds that 
    $$
        \delta(x - y)  = 2\delta(\lvert x\rvert^2 - \lvert y\rvert^2) \delta(\arg(x) - \arg(y))\,.
    $$
    Hence, we can rewrite the last expression as
    \begin{multline*}
        = 4\pi \sqrt{3}  \int_{C \times P}  \delta(\lvert \omega_1 + \omega_2 + \omega_3\rvert^2 - \lvert \omega_4 +\omega_5 + \omega_6\rvert^2) \delta( \arg(\omega_1 + \omega_2 + \omega_3) - \arg(\omega_4 + \omega_5 + \omega_6)) \\
        \times(\lvert \omega_1 + \omega_2 + \omega_3\rvert ^2 - 1) (\lvert f(\omega_1, \omega_2, \omega_3)\rvert^2 - f(\omega_1, \omega_2, \omega_3) \overline{\omega_4, \omega_5, \omega_6)})\,
         \mathrm{d}\mathcal{H}^2_C(\theta_1, \theta_2, \theta_3) \, \prod_{j = 4}^6 \mathrm{d}\theta_j
    \end{multline*}
    \begin{multline*}
        = 12 \pi  \int_{C \times C} \int_0^{2 \pi} \delta(a(\theta_1, \theta_2, \theta_3) - a(\theta_4,\theta_5,\theta_6)) \delta( \arg(\omega_1 + \omega_2 + \omega_3) - \arg(\omega_4 + \omega_5 + \omega_6) - t)\\
        \times(a(\theta_1, \theta_2, \theta_3) - 1) (\lvert f(\omega_1, \omega_2, \omega_3)\rvert^2 - f(\omega_1, \omega_2, \omega_3) \overline{f(e^{it}\omega_4, e^{it}\omega_5, e^{it}\omega_6)})\,\mathrm{d}t\, \mathrm{d}\mathcal{H}^4_{C\times C}(\theta) \, \,.
    \end{multline*}
    Since $f \in X_d$, we have $f(e^{it}\omega_4, e^{it} \omega_5, e^{it}\omega_6) = e^{itd} f(\omega_4, \omega_5, \omega_6)$. Thus, we can integrate out $t$ and obtain the claimed identity.
\end{proof}

\subsection{Completing the proof}
By \cref{lem dimred}, we have for all $d$ and all $f \in X_d$:
\begin{align}
    B(f,f) &\geq 12\pi \int_{C} \lvert f(\theta)\rvert^2 (a(\theta) - 1) \int_C \delta(a(\theta) - a(\theta'))   \, \mathrm{d}\mathcal{H}^2_C(\theta')\,
    \mathrm{d}\mathcal{H}^2_C(\theta) \nonumber\\
    &\quad- 12\pi \int_{C^2} \delta(a(\theta) - a(\theta')) \lvert a(\theta) - 1\rvert \lvert f(\theta)\rvert \lvert f(\theta')\rvert\, \mathrm{d}\mathcal{H}^2_C(\theta)\, \mathrm{d}\mathcal{H}^2_C(\theta')\nonumber\\
    &=:12\pi( I - I\!I)\,. \label{eq I def}
\end{align}
If $f \in X_{d, \varepsilon}$, then the restriction of $f$ onto the hyperplane $H := \{(\theta_1, \theta_2, \theta_3) \, : \, \theta_1 + \theta_2 + \theta_3 = 0\}$ is supported in $\Lambda + B_\varepsilon(0)$, where $\Lambda = \mathbb{Z}v_1 \oplus \mathbb{Z}v_2$ and 
$$
    v_1 := (\frac{2\pi}{3}, -\frac{\pi}{3}, -\frac{\pi}{3}), \qquad  v_2 := ( -\frac{\pi}{3}, \frac{2\pi}{3}, -\frac{\pi}{3})\,.
$$
Furthermore, the function $\lvert f\rvert$ is periodic with respect to $\Lambda$, since it is periodic with respect to $\pi\mathbb{Z}^3$ and invariant under all translations in direction  $(1,1,1)$. Thus it suffices to show:

\begin{lemma}
    \label{ReducedLemma}
    Suppose that $\varepsilon \leq 1/20$. Then for all functions $f: H \to [0, \infty)$ that are periodic with respect to $\Lambda$ and supported in $\Lambda + B_\varepsilon(0)$, it holds that $I \geq I\!I$.
\end{lemma}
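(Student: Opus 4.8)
The plan is to rewrite $I$ as a multiplier quadratic form, localise everything to the $\varepsilon$-balls around the lattice points of $\Lambda$, and then exploit the geometry of $a$ near its critical points on $H$. Put $\rho(s) := \int_C \delta(a(\theta')-s)\,\mathrm{d}\mathcal{H}^2_C(\theta')$, so that $I = \int_C m(\theta) f(\theta)^2\,\mathrm{d}\mathcal{H}^2_C(\theta)$ with $m(\theta) := (a(\theta)-1)\rho(a(\theta))$, a function which is negative precisely on $\{a<1\}$. Since $C$ is a fundamental domain for $2\Lambda$ and $f=|f|$ is $\Lambda$-periodic, $f$ restricted to $C$ is supported in the $\varepsilon$-ball around the point $p_\natural$ where $a$ attains its maximum $9$, together with the $\varepsilon$-balls around the three critical points of $a|_H$ where $a = 1$; on each of these balls $f$ equals the \emph{same} profile $\phi := f|_{B_\varepsilon(0)}$. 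As $\nabla a$ vanishes at each of these points, $a(p+\xi) = a(p) + q_p(\xi) + O(|\xi|^3)$ with $q_p := \tfrac12\,\mathrm{Hess}\,a(p)$, so on the top ball $a \in [9-3\varepsilon^2, 9]$ and on the others $a \in [1-\varepsilon^2, 1+3\varepsilon^2]$ up to the cubic error; for $\varepsilon \le 1/20$ these ranges are disjoint, so the kernel $\delta(a(\theta)-a(\theta'))$ in $I\!I$ only couples balls of the same type, and $I - I\!I$ splits as a ``top'' contribution plus a ``critical'' contribution, with no cross terms.

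I would dispose of the top contribution directly: on $B_\varepsilon(p_\natural)$ one has $a-1 \ge 8 - 3\varepsilon^2 > 0$, and for $\varepsilon$ small every level set $\{a = s\}$ meeting this ball is contained in it, so $\rho$ is there the full level-set measure; slicing $I_\natural$ and $I\!I_\natural$ by the level sets of $a$ and applying Cauchy--Schwarz on each (nearly circular) level curve give $I\!I_\natural \le I_\natural$.

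The critical contribution is the heart. The three critical points with $a=1$ are cyclically permuted by the order-three rotation of $H$ coming from the cyclic permutation of $\theta_1,\theta_2,\theta_3$ (which leaves $a$ invariant), so the three forms $q_p$ are mutually isometric, each with eigenvalues $\{3,-1\}$, and $\sum_{p} q_p(\xi) = 3|\xi|^2$ for all $\xi$; consequently each $\{q_p < 0\}$ is a $120^\circ$ double-wedge and the three of them tile $H$, so that for $\xi \ne 0$ exactly one $q_p(\xi)$ is negative and the other two positive. Since $f$ is the same $\phi$ on all three balls, $I_\flat = \int_{B_\varepsilon(0)} M(\xi)\,\phi(\xi)^2\,\mathrm{d}\xi$ with $M(\xi) := \sum_p (a(p+\xi)-1)\rho(a(p+\xi))$. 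I would first prove $M \ge 0$ on $B_\varepsilon(0)$: the single negative summand, of size $\sim \varepsilon^2|\log\varepsilon|$, is beaten by the two positive ones, using $\sum_p q_p = 3|\xi|^2$ together with the asymptotics $\rho(1+c) = \sqrt{3}\,|\log|c|| + O(1)$ as $c \to 0$ (from \cref{AsymptoticsRho}, which also yields the multiplier formula in \cref{lem I est}); this comes down to $\log\tfrac1\varepsilon$ exceeding an explicit constant, which $\varepsilon = 1/20$ achieves. To then bound $I\!I_\flat$ I would avoid the lossy estimate $\phi(\xi)\phi(\xi') \le \tfrac12(\phi(\xi)^2+\phi(\xi')^2)$, which breaks the inequality on $\{a<1\}$, and instead slice the whole critical region by the level sets of $a$, writing $I\!I_\flat = \int |c|\,G(c)^2\,\mathrm{d}c$ with $G(c) = \sum_p \int_{B_\varepsilon}\delta(q_p(\xi)-c)\phi(\xi)\,\mathrm{d}\xi$, and estimate $G(c)^2$ against $\int_{B_\varepsilon} M\phi^2$ while keeping the three saddle balls coupled; the pairwise disjointness of their negative cones, the identity $\sum_p q_p = 3|\xi|^2$, and the logarithmic blow-up of $\rho$ at $s=1$ together let the $\{a>1\}$ part dominate both the $\{a<1\}$ part and the off-diagonal terms. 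Combined with $I\!I_\natural \le I_\natural$, this gives $I \ge I\!I$.

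The hard part will be precisely this last estimate: because the $q_p$ are indefinite, $I$ has a genuinely negative piece near the $a=1$ points that is of the same order $\varepsilon^4|\log\varepsilon|$ as the positive pieces, so nothing here is soft. Pushing it through needs the \emph{sharp} constant in the logarithmic asymptotics of $\rho$, a non-lossy treatment of the coupling between the three saddle balls inside $I\!I$, and careful control of the cubic remainder in $a(p+\xi) = a(p) + q_p(\xi) + O(|\xi|^3)$ — and it is exactly this bookkeeping that pins down the admissible threshold $\varepsilon \le 1/20$.
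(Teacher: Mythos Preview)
Your structural setup matches the paper's: split into the ``top'' ball around the maximum of $a$ and the three ``saddle'' balls around the critical points with $a=1$; handle the top by Cauchy--Schwarz (this is exactly the paper's $I\!I_{11}\le I_1$); observe that top and saddle balls do not interact. The identities you record ($q_p$ has eigenvalues $3,-1$, the three negative cones tile, $\sum_p q_p=3|\xi|^2$) are all correct and are also what drives the paper's lower bound on $M$.

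The gap is in your treatment of $I\!I_\flat$. Writing $I\!I_\flat=\int|c|\,G(c)^2\,\mathrm{d}c$ is fine, but you never say how to ``estimate $G(c)^2$ against $\int M\phi^2$''. The natural attempt, Cauchy--Schwarz on each level set, gives $G(c)^2\le L(c)H(c)$ with $L(c)=\sum_p|\{q_p=c\}\cap B_\varepsilon|$ and $H(c)=\sum_p\int_{B_\varepsilon}\delta(q_p-c)\phi^2$, whence $I\!I_\flat\le\int|c|\,L(c)H(c)\,\mathrm{d}c$; meanwhile $I_\flat=\int c\,\rho(1+c)H(c)\,\mathrm{d}c$. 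For $c<0$ the right-hand side is \emph{negative} while the left is positive, so the comparison collapses precisely on the region you flag as the hard part. Your sentence that ``disjointness of the negative cones, $\sum q_p=3|\xi|^2$, and the logarithmic blow-up of $\rho$'' will make the positive part dominate is a restatement of the goal, not a mechanism; you have not shown how to couple the three balls in $G(c)^2$ without incurring the same loss. Since the final inequality is numerically tight ($30$ versus $9\pi\cdot\tfrac{101}{100}\approx 28.6$), a soft argument here will not close.

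The paper does something quite different for $I\!I_\flat$. It does \emph{not} compare $I\!I$ to $I$ through the level-set density $\rho$ at all. Instead it bounds each of the nine blocks $I\!I_{ij}$ ($2\le i,j\le 4$) by the same log-free quantity $\tfrac{101}{100}\pi\int_{B_1}|\theta|^2\phi^2$. The device is to pass to polar coordinates $(s,\alpha)$, $(t,\beta)$ on the two balls, write the constraint as $\delta(1-h)$ with $h\approx (t/s)^2\,g(\beta)/g(\alpha)$, note that $\partial_s h|_{h=1}\approx -2/s$, integrate out the delta via the implicit function $s=s(t)$, and then Cauchy--Schwarz in $t$. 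The Jacobian $\approx 1/2$ and the free angular integral contribute the single factor of $\pi$. The logarithm appears only in the \emph{lower} bound $M(\xi)\ge 30|\xi|^2$ for $I_\flat$, which in turn relies on the sharp inequality $\sum_j a_j\log|a_j|\le 0$ for $a_j=\tfrac13-\tfrac23\cos(2\alpha+4\pi j/3)$. So the paper separates the two sides against the common pivot $\int|\theta|^2\phi^2$; your proposal tries to compare them directly, and that is where it stalls.
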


\begin{proof}
Note that $C$ is a fundamental domain of the lattice $2 \Lambda$. The expressions in the integrals for term $I$ and $I\!I$ are $2\Lambda$ periodic, so we may replace $C$ by any other fundamental domain $C'$. Since $f$ is supported in $\Lambda + B(0,\varepsilon)$, there exists a fundamental domain $C'$ such that $f|_{C'}$ is supported in 
$$
    B_{\varepsilon}(0,0,0) \cup B_{\varepsilon}(\frac{2\pi}{3}, -\frac{\pi}{3}, -\frac{\pi}{3}) \cup B_{\varepsilon}(-\frac{\pi}{3}, \frac{2\pi}{3}, -\frac{\pi}{3}) \cup B_{\varepsilon}(-\frac{\pi}{3}, -\frac{\pi}{3}, \frac{2\pi}{3}) =: B_1 \cup B_2 \cup B_3 \cup B_4\,. 
$$
We decompose
\begin{equation}
    \label{eq Ii def}
    I = \sum_{i = 1}^4 \int_{B_i} \lvert f(\theta)\rvert^2 (a(\theta) - 1) \int_C \delta(a(\theta) - a(\theta'))   \, \mathrm{d}\mathcal{H}^2_C(\theta')\,
    \mathrm{d}\mathcal{H}^2_C(\theta) =: \sum_{i = 1}^4 I_i\,,
\end{equation}
\begin{equation}
    \label{eq IIi def}
    I\!I = \sum_{1 \leq i,j \leq 4} \int_{B_i\times B_j}  \delta(a(\theta) - a(\theta')) \lvert a(\theta) - 1\rvert \lvert f(\theta)\rvert \lvert f(\theta')\rvert\, \mathrm{d}\mathcal{H}^2_C(\theta)\, \mathrm{d}\mathcal{H}^2_C(\theta') =: \sum_{1 \leq i,j \leq 4} I\!I_{ij}\,.
\end{equation}
Note that $\lvert \theta\rvert < \pi/6$ implies, by \eqref{eq a cos}, that $a(\theta) \geq 3 + 6 \cos(\pi/3) = 6$,
and $\lvert \theta - (2\pi/3, -\pi/3, -\pi/3)\rvert  < \pi/6$ implies similarly that $a(\theta) \leq 3$. Therefore, for $j = 2,3,4$ the measure $\delta(a(\theta) - a(\theta'))$ vanishes on $B_1 \times B_j$, thus $I_{1j} = I_{j1} = 0$.
Next, we record that $I\!I_{11}\leq I_1$, by Cauchy-Schwarz and since $a(\theta) \geq 6$ on $B_1$:
\begin{align*}
    I\!I_{11} &= \int_{B_1^2} \delta(a(\theta) -  a(\theta')) \lvert a(\theta) - 1\rvert \lvert f(\theta)\rvert\lvert f(\theta')\rvert \mathrm{d}\mathcal{H}^2_C(\theta) \,\mathrm{d}\mathcal{H}^2_C(\theta')\\
    &\leq \frac{1}{2} \int_{B_1^2} \delta(\tilde a(\theta) - \tilde a(\theta')) (\tilde a(\theta) - 1) (\lvert f(\theta)\rvert^2 + \lvert f(\theta')\rvert ^2) \, \mathrm{d}\mathcal{H}^2_C(\theta)\,\mathrm{d}\mathcal{H}^2_C(\theta')\\
    &\leq \int_{B_1} \lvert f(\theta)\rvert^2 (\tilde{a}(\theta) -1) \int_{C} \delta(\tilde{a}(\theta) - \tilde{a}(\theta'))  \, \mathrm{d}\mathcal{H}^2_C(\theta)\,\mathrm{d}\mathcal{H}^2_C(\theta') = I_1\,.
\end{align*}
The remaining terms are estimated in the next two sections. By \cref{lem I est} and \cref{lem II est}, we have
$$
    I_2 + I_3 + I_4 \geq  30 \int_{B_1} \lvert \theta\rvert^2 \lvert f(\theta)\rvert^2 \, \mathrm{d}\mathcal{H}^2_{H}(\theta) > 9 \frac{101}{100} \pi \int_{B_1} \lvert \theta\rvert ^2 \lvert f(\theta)\rvert^2 \, \mathrm{d}\mathcal{H}^2_{H}(\theta) \geq \sum_{2 \leq i, j \leq 4} I\!I_{ij}\,,
$$
which completes the proof.
\end{proof}

\section{Estimating term \texorpdfstring{$I$}{I}}

\begin{lemma}
    \label{lem I est}
    It holds that 
    \begin{equation}
    \label{eq lem I est}
        I_2+ I_3 + I_4= \int_{B_1} m(\theta) \lvert f(\theta)\rvert^2 \, \mathrm{d}\mathcal{H}^2_{H}(\theta)\,,
    \end{equation}
    where $I_j$ is defined in \eqref{eq Ii def}, and $m(\theta) \geq 30\lvert \theta \rvert^2$.
\end{lemma}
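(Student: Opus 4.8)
The plan is to reduce the lemma to a one–variable statement about the ``density of states'' of $a$. The centres $c_2:=v_1$, $c_3:=v_2$, $c_4:=-v_1-v_2$ of $B_2,B_3,B_4$ all lie in $\Lambda$, so $\lvert f\rvert$ is invariant under translation by each $c_j$; moreover the inner $\theta'$–integral in \eqref{eq Ii def} depends on $\theta$ only through $a(\theta)$, so the integrand of $I_j$ equals $\lvert f(\theta)\rvert^2\,(a(\theta)-1)\,g(a(\theta))$ with $g(s):=\int_C\delta(s-a(\theta'))\,\mathrm d\mathcal H^2_C(\theta')\ge 0$ (well defined since $a|_H$ is $2\Lambda$–periodic and $\mathcal H^2$ is translation invariant, so the choice of $2\Lambda$–fundamental domain is immaterial). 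Hence the change of variables $\theta\mapsto\theta-c_j$, followed by summation over $j=2,3,4$, gives \eqref{eq lem I est} at once, with
$$
    m(\theta)=\sum_{j=2}^4\bigl(a(\theta+c_j)-1\bigr)\,g\bigl(a(\theta+c_j)\bigr).
$$
So the entire content of the lemma is the pointwise lower bound $m(\theta)\ge 30\lvert\theta\rvert^2$ for $\lvert\theta\rvert\le 1/20$.

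I would assemble this bound from two ingredients. First, a direct computation from \eqref{eq a cos} yields the exact identity $\sum_{j=2}^4 a(\theta+c_j)=12-a(\theta)$, hence $\sum_{j=2}^4(a(\theta+c_j)-1)=9-a(\theta)=2\sum_{\mathrm{cyc}}(1-\cos(\theta_i-\theta_j))$, which on $H$ is $3\lvert\theta\rvert^2+O(\lvert\theta\rvert^4)$; more precisely $a(\theta+c_j)-1=P_j(\theta)+O(\lvert\theta\rvert^3)$, where $P_j$ is the quadratic form given by half the Hessian of $a$ at $c_j$, and one checks that on $H$ this form has eigenvalue $3$ along the direction of $c_j$ and eigenvalue $-1$ in the orthogonal direction. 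Second, I would pin down the behaviour of $g$ near $s=1$. On $H/2\Lambda$ the function $a$ has a single maximum ($a=9$ at $0$) and exactly three saddle points ($a=1$ at $c_2,c_3,c_4$); by the coarea formula $g(s)=\int_{\{a=s\}\cap C}\lvert\nabla_Ha\rvert^{-1}\,\mathrm d\mathcal H^1$, so $g$ has a logarithmic singularity at $s=1$ and is otherwise smooth there. In the $(3,-1)$–eigencoordinates $(s,t)$ at $c_j$ one has $a-1=3s^2-t^2+O((s^2+t^2)^{3/2})$ and $\lvert\nabla_Ha\rvert^2=4(9s^2+t^2)(1+o(1))$, so an explicit integration shows each saddle contributes $\tfrac1{\sqrt3}\lvert\log\lvert\sigma\rvert\rvert+O(1)$ to $g(1+\sigma)$, and altogether
$$
    g(1+\sigma)=\sqrt3\,\lvert\log\lvert\sigma\rvert\rvert+O(1),
$$
with the $O(1)$ made explicit and two–sided; this is the content of \cref{AsymptoticsRho}.

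Finally I would combine the two. For $\lvert\theta\rvert\le\varepsilon=1/20$ each $a(\theta+c_j)$ lies in $[1-\tfrac3{400},1+\tfrac3{400}]$, so the displayed asymptotics of $g$ apply to every summand of $m(\theta)$. Writing $\theta=r\omega$ with $\lvert\omega\rvert=1$, the three numbers $a(\theta+c_j)-1$ are, to leading order, $r^2q_j(\omega)$, and since the $P_j$ are $120^\circ$–rotations of a form with eigenvalues $3,-1$ one has $\sum_jq_j=3$, $\sum_{j<k}q_jq_k=0$ and $q_2q_3q_4\le 0$; in particular each $q_j\in[-1,3]$, at most one $q_j$ is negative, and then it lies in $[-1,0)$. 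Substituting and using $\lvert\log r\rvert\ge\log 20$ reduces $m(\theta)\ge 30r^2$ to a one–parameter inequality in the direction of $\theta$, which I would verify by a short calculus argument; the extremal direction is the one with $(q_2,q_3,q_4)$ a permutation of $(2,2,-1)$. The hard part is exactly this last step: the leading term of $m(\theta)$ is $\sqrt3\,(9-a(\theta))\cdot 2\lvert\log\lvert\theta\rvert\rvert\approx 6\sqrt3\,\lvert\theta\rvert^2\log 20\approx 31.1\,\lvert\theta\rvert^2$, which exceeds the target $30\lvert\theta\rvert^2$ by only a slim margin, so the $O(1)$ error in the asymptotics of $g$, the $O(\lvert\theta\rvert^3)$ corrections to $a(\theta+c_j)-1$, and the negative contribution of the summand with $q_j<0$ all have to be controlled with near–sharp constants — which is precisely where the explicit form of \cref{AsymptoticsRho} and the specific choice $\varepsilon=1/20$ are used.
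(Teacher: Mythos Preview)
Your overall plan is essentially the paper's: write $m(\theta)=\sum_{j=2}^4(a(\theta+c_j)-1)\,g(a(\theta+c_j))$, expand $a(\theta+c_j)-1=r^2q_j(\omega)+O(r^4)$ with $q_j$ the $2\pi/3$--rotates of $3\sin^2\alpha-\cos^2\alpha$, and feed in the asymptotics of $g$ near $1$. One difference is that the paper does not compute $g$ via coarea and saddle contributions; instead it reverses the reduction of \cref{lem dimred} to get the clean identity $g(s)=\tfrac{1}{2\sqrt3}\,\sigma*\sigma*\sigma(x)\big|_{|x|^2=s}=\tfrac{1}{2\sqrt3}\rho(\sqrt s)$, so that \cref{AsymptoticsRho} applies directly. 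Your Morse--theoretic sketch would also work, but you should note that ``this is the content of \cref{AsymptoticsRho}'' only after establishing $g=\tfrac{1}{2\sqrt3}\rho(\sqrt{\,\cdot\,})$.

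The genuine gap is your claim about the extremal direction. The direction--dependent subleading term in $m$ is $-\sqrt3\,r^2\sum_j q_j\log|q_j|$, so bounding $m$ from below requires the \emph{maximum} of $\sum_j q_j\log|q_j|$ over the circle $\sum_j q_j=3$, $\sum_{j<k}q_jq_k=0$. At your candidate $(2,2,-1)$ one gets $4\log 2\approx 2.77$; but at $(3,0,0)$ (which is attained, e.g.\ at $\alpha=\pi/2$) one gets $3\log 3\approx 3.30$, and a local analysis near $(3,0,0)$ shows this is a maximum, not a minimum. So $(2,2,-1)$ is the wrong extremal direction --- it is in fact where $m/r^2$ is \emph{largest}, not smallest. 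The sharp inequality $\sum_j q_j\log|q_j|\le 3\log 3$ is exactly \cref{auxilliary4} in the paper, proved by a Jensen argument that is not quite a ``short calculus argument''. Once you use this correct bound, the computation closes as in the paper: $m(\theta)\ge\bigl(6\sqrt3\log 20+9\sqrt3\log 2-3\sqrt3\log 3-O(\varepsilon^2\log\tfrac1\varepsilon)\bigr)r^2\approx 34.9\,r^2\ge 30\,r^2$; note also that the $O(1)$ constant $3\sqrt3\log 2$ in $g$ is a \emph{positive} contribution, not an error to be absorbed.
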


\begin{proof}
    By definition of the $I_j$, equation \eqref{eq lem I est} holds with
    $$
        m(\theta) = \sum_{j = 2}^4 (a(\theta + c_j) - 1) \int_C \delta(a(\theta + c_j) - a(\theta')) \, \mathrm{d}\mathcal{H}^2_C(\theta')\,,
    $$
    where $c_j$ is the center of the ball $B_j$. 
    Reversing the argument in the proof of  \cref{lem dimred}, it follows that for $x \in \R^2$
    \begin{multline*}
        \int_C \delta(\lvert x\rvert^2 - a(\theta')) \, \mathrm{d}\mathcal{H}^2_C(\theta') = \frac{1}{\sqrt{3}}\int_P \delta(\lvert x\rvert^2 - \lvert \omega_1 + \omega_2 + \omega_3\rvert^2) \delta(\arg(x) - \arg(\omega_1 + \omega_2 + \omega_3)) \, \prod_{j = 1}^3 \mathrm{d}\theta'_j\\
        = \frac{1}{2\sqrt{3}} \int_{(S^1)^3} \delta(x - (\omega_1 + \omega_2 + \omega_3))  \, \prod_{j = 1}^3 \mathrm{d}\sigma(\omega_j)= \frac{1}{2\sqrt{3}} \sigma * \sigma * \sigma(x)\,.
    \end{multline*}
    The convolution $\sigma*\sigma*\sigma$ is radial. We set $\sigma * \sigma * \sigma(x) = \rho(\lvert x\rvert)$, giving
    \begin{equation}
        \label{mdef}
        m(\theta) = \frac{1}{2\sqrt{3}} \sum_{j = 2}^4 (a(\theta + c_j) - 1) \rho(\sqrt{a(\theta + c_j)})\,.
    \end{equation}
    In polar coordinates 
    \begin{align}
        \label{variables1}
        \begin{pmatrix}
            \theta_1\\
            \theta_2\\
            \theta_3
        \end{pmatrix}
        &=
        s\cos(\alpha) \frac{1}{\sqrt{2}}
        \begin{pmatrix}
            1\\
            -1\\
            0
        \end{pmatrix}
        + s\sin(\alpha) \frac{1}{\sqrt{6}}
        \begin{pmatrix}
            1\\
            1\\
            -2
        \end{pmatrix}
    \end{align}
    we compute in \cref{auxilliary3} the asymptotic expansion
    \begin{align}
        (a(\theta + c_4) - 1)\rho(\sqrt{a(\theta + c_4)})
        &= -12s^2 (3\sin^2 (\alpha) - \cos^2(\alpha)) \log(s) \label{eq rho1}\\
        &\quad- 6 s^2 (3\sin^2 (\alpha) - \cos^2(\alpha)) \log \lvert 3\sin^2 (\alpha) - \cos^2(\alpha)\rvert \label{eq rho2}\\
        &\quad+ 18 \log 2 \, s^2 (3 \sin^2(\alpha) - \cos^2(\alpha)) \label{eq rho3}\\
        &\quad+ E\,, \nonumber
    \end{align}
    with $\lvert E\rvert \leq - 180 s^4 \log s + 71 s^4$ when $s \leq 1/20$. As the function $a$ is invariant under permutation of its arguments and constant in direction $(1,1,1)$, it is invariant under the rotation $T$ by $2\pi/3$ about the line $\R (1,1,1)$. Since 
    $$
        c_2 + \theta(\alpha, s) = T(c_4 + \theta(\alpha + 4\pi/3, s))\,, \qquad c_3 + \theta(\alpha, s) = T^2(c_4 + \theta(\alpha + 2\pi/3, s))\,,
    $$
    we obtain the same asymptotic expansion for $a(\theta + c_j) \rho(\sqrt{a(\theta + c_j)})$, $j = 2,3$, but with $\alpha$ replaced by $\alpha + \frac{4\pi}{3}$ and $\alpha + \frac{2\pi}{3}$. 

    We now consider \eqref{mdef}. The term \eqref{eq rho1} contributes $-6 \sqrt{3} s^2 \log(s)$ to $m$ and the term \eqref{eq rho3} contributes $9 \sqrt{3} \log(2) s^2$, since for all $\alpha$
    $$
    \sum_{j = 1}^3 (3\sin^2(\alpha + \frac{2\pi j}{3}) - \cos^2(\alpha + \frac{2\pi j}{3}))  =  3\,.
    $$
    For term \eqref{eq rho2} we use the sharp estimate
    $$
        \sum_{j = 1}^3 (3\sin^2(\alpha + \frac{2\pi j}{3}) - \cos^2(\alpha +  \frac{2\pi j}{3})) \log \lvert 3\sin^2(\alpha + \frac{2\pi j}{3}) - \cos^2(\alpha + \frac{2\pi j}{3})\rvert \leq 3 \log(3)\,,
    $$
    which we prove in \cref{auxilliary4}.
    Hence, for $s \leq 1/20$,
    \begin{align*}
        m(\theta) &\geq  -6\sqrt{3}s^2 \log(s) + (9\sqrt{3}\log(2) - 3\sqrt{3}\log(3)) s^2 + 90\sqrt{3} s^4\log s - 62 s^4 \\
        &\geq (6 \sqrt{3} \log(20) + 9 \sqrt{3} \log(2) - 3\sqrt{3}\log(3) - 90\sqrt{3}/400 \log(20) - 62/400) s^2 \approx 34.906 \ s^2\,,
    \end{align*}
    as claimed.
\end{proof}

\section{Estimating term \texorpdfstring{$I\!I$}{II}}

\begin{lemma}
    \label{lem II est}
    For all $2 \leq i,j \leq 4$ and all $f$, it holds that 
    $$
        I\!I_{ij} \leq \frac{101}{100}\pi \int_{B_1} \lvert \theta\rvert ^2 \lvert f(\theta)\rvert^2 \, \mathrm{d}\mathcal{H}^2_H(\theta)\,.
    $$
\end{lemma}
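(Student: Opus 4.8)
The plan is to reduce $I\!I_{ij}$ to a one‑parameter integral over the common value of $a$ on the two factors, and then to bound a scalar multiplier attached to each value. First I would use periodicity: the three centres satisfy $c_2 = v_1$, $c_3 = v_2$ and $c_4 = -v_1 - v_2$, so they all lie in $\Lambda$, and since $|f|$ is $\Lambda$‑periodic, after choosing the fundamental domain $C'$ adapted to $\supp f$ and substituting $\theta = c_i + \eta$, $\theta' = c_j + \eta'$ with $\eta, \eta' \in B_\varepsilon(0) \subset H$ one gets
\begin{equation*}
  I\!I_{ij} = \int_{B_\varepsilon(0)^2} \delta\big(a(c_i+\eta) - a(c_j+\eta')\big)\,|a(c_i+\eta) - 1|\,|f(\eta)|\,|f(\eta')|\,\mathrm{d}\mathcal{H}^2_H(\eta)\,\mathrm{d}\mathcal{H}^2_H(\eta')\,.
\end{equation*}
Each $c_i$ is a critical point of $a$ with $a(c_i) = 1$, and after the shift $\nabla a(c_i + \cdot)$ vanishes on $B_\varepsilon(0)$ only at the origin, so I would apply the coarea formula in both variables to write $I\!I_{ij} = \int_{\R} |v|\,\Psi_i(v)\,\Psi_j(v)\,\mathrm{d}v$, where $\Psi_i(v) = \int_{L^{(i)}_v} |f|\,|\nabla a(c_i+\cdot)|^{-1}\,\mathrm{d}\mathcal{H}^1$ and $L^{(i)}_v = \{\eta \in B_\varepsilon(0) : a(c_i+\eta) = 1 + v\}$.

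Next I would run a weighted Cauchy--Schwarz on each level curve: writing $|f| = (|f|\,|\eta|)\cdot|\eta|^{-1}$ gives $\Psi_i(v)^2 \le \kappa_i(v)\,A_i(v)$ with $\kappa_i(v) = \int_{L^{(i)}_v} |\eta|^{-2}|\nabla a(c_i+\cdot)|^{-1}\,\mathrm{d}\mathcal{H}^1$ and $A_i(v) = \int_{L^{(i)}_v} |\eta|^2|f|^2\,|\nabla a(c_i+\cdot)|^{-1}\,\mathrm{d}\mathcal{H}^1$. Since $|v| = |v|^{1/2}|v|^{1/2}$ and $(A_iA_j)^{1/2} \le \tfrac12(A_i + A_j)$, and since the coarea formula also gives $\int_{\R} A_i(v)\,\mathrm{d}v = \int_{B_\varepsilon(0)} |\eta|^2 |f|^2 = \int_{B_1} |\theta|^2|f(\theta)|^2\,\mathrm{d}\mathcal{H}^2_H(\theta)$ (and likewise for $j$), this yields
\begin{equation*}
  I\!I_{ij} \le \Big(\sup_v |v|\kappa_i(v)\Big)^{1/2}\Big(\sup_v |v|\kappa_j(v)\Big)^{1/2} \int_{B_1} |\theta|^2|f(\theta)|^2\,\mathrm{d}\mathcal{H}^2_H(\theta)\,,
\end{equation*}
so it suffices to show $\sup_v |v|\kappa_i(v) \le \tfrac{101}{100}\pi$ for $i = 2,3,4$.

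To estimate $|v|\kappa_i(v)$ I would pass to polar coordinates $\eta = s\,e(\alpha)$ on $H$ as in \eqref{variables1} and set $g_\alpha(s) = a(c_i + s\,e(\alpha)) - 1$. Because every term of $a$ is the cosine of a linear form, $g_\alpha$ is an even function of $s$; expanding around $c_i$ and using \eqref{eq a cos} gives $g_\alpha(s) = \psi_i(\alpha)\,s^2 + O(s^4)$ with $\psi_i(\alpha) = 1 - 2\cos(2\alpha + \gamma_i)$, $\gamma_i \in \{0, \tfrac{2\pi}{3}, \tfrac{4\pi}{3}\}$. Parametrising $L^{(i)}_v$ by $\alpha$ over the arc on which $g_\alpha$ attains the value $v$ inside $B_\varepsilon(0)$ — a subset of $\{\operatorname{sgn}\psi_i(\alpha) = \operatorname{sgn} v\}$, since $g_\alpha$ is strictly monotone on $(0,\varepsilon]$ with the sign of $\psi_i(\alpha)$ — a direct computation gives $|v|\kappa_i(v) = \int |g_\alpha(s_*)|\,\big(s_*\,|g_\alpha'(s_*)|\big)^{-1}\,\mathrm{d}\alpha$, where $s_* = s_*(\alpha,v) \in (0,\varepsilon]$ solves $g_\alpha(s) = v$. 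The crucial input is the algebraic identity that the $s^4$‑coefficient of $g_\alpha$ equals $-\tfrac{2\cos(2\alpha+\gamma_i)+5}{24}\,\psi_i(\alpha)$ — in particular it vanishes wherever $\psi_i$ does (and, in fact, $a \equiv 1$ along the null rays of $\psi_i$, so all higher coefficients vanish there too), whence $g_\alpha(s) = \psi_i(\alpha)\big(s^2 + O(s^4)\big)$ uniformly in $\alpha$. Therefore $|g_\alpha(s)|\,(s\,|g_\alpha'(s)|)^{-1} = \tfrac12(1 + O(\varepsilon^2))$ on $(0,\varepsilon]$, uniformly, with an $O(\varepsilon^2)$ term of size at most $\tfrac{7}{24}\varepsilon^2$; combined with the fact that the admissible arc has length at most $\max(|\{\psi_i > 0\}|, |\{\psi_i < 0\}|) = \tfrac{4\pi}{3}$, this gives $\sup_v |v|\kappa_i(v) \le \tfrac12\big(1 + \tfrac{7}{24}\varepsilon^2\big)\tfrac{4\pi}{3} < \tfrac{101}{100}\pi$ for $\varepsilon = 1/20$ — in fact with a comfortable margin, the genuine bound being close to $\tfrac{2\pi}{3}$.

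The main obstacle is this last step: one must control $g_\alpha$ and $g_\alpha'$ uniformly in $\alpha$ up to $s = \varepsilon$, including as $\alpha$ tends to a null direction of $\psi_i$, where $g_\alpha$ and all its $s$‑derivatives at $0$ degenerate simultaneously. This rests on verifying the explicit quartic‑coefficient identity and checking that the remaining Taylor tail is of lower order; a secondary point is to set up the parametrisation of $L^{(i)}_v$ (and the two coarea applications) carefully near the endpoints of the admissible arc, where $|\nabla a(c_i+\cdot)|$ is smallest.
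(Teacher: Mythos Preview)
Your argument is correct and takes a genuinely different, somewhat cleaner route than the paper's. The paper rewrites $\delta(a(\theta)-a(\theta'))\,|a(\theta)-1|$ as $\delta\big(1-h(s,t,\alpha,\beta)\big)$ with $h=\frac{1-a(c_j+\theta')}{1-a(c_i+\theta)}$, fixes the angles $(\alpha,\beta)$, solves $h=1$ implicitly for $s=s(t)$, applies Cauchy--Schwarz in $t$, and then a second Cauchy--Schwarz over the full square $(\alpha,\beta)\in[0,2\pi)^2$; this last step is what produces the factor $2\pi$ and hence the constant $\tfrac{101}{100}\pi$. Your coarea--plus--level-curve organisation instead fixes the common value $v=a(c_i+\eta)-1$ first and applies Cauchy--Schwarz along $L^{(i)}_v$. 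Both arguments reduce to the very same Jacobian quantity $\big(2+s\psi'/(1+\psi)\big)^{-1}$, bounded by $101/200$ via \cref{Auxilliary2}; your factorisation $g_\alpha(s)=\psi_i(\alpha)\,s^2(1+O(s^2))$, the quartic identity, and the observation that $a\equiv 1$ on the null rays of $\psi_i$ are exactly the content of that lemma. The payoff of your organisation is that the angular integration is automatically confined to the sign-constrained arc $\{\operatorname{sgn}\psi_i=\operatorname{sgn}v\}$ of length at most $\tfrac{4\pi}{3}$, so you actually obtain $\sup_v|v|\kappa_i(v)\le\tfrac{101}{200}\cdot\tfrac{4\pi}{3}=\tfrac{101}{150}\pi$, two thirds of the paper's constant; the paper simply discards this gain by integrating over the full circle. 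Your approach also handles $i\neq j$ symmetrically from the outset, whereas the paper reduces those cases to $I\!I_{44}$ by a rotation of the angular variables. Two small remarks on your sketch: first, the ``endpoint'' worry is not a real obstacle, since writing $\kappa_i(v)=\int_0^{2\pi}\int_0^\varepsilon s^{-1}\delta(g_\alpha(s)-v)\,\mathrm{d}s\,\mathrm{d}\alpha$ directly bypasses any delicate parametrisation of $L^{(i)}_v$ (and the only critical point of $a(c_i+\cdot)$ in $B_\varepsilon(0)$ is the origin, contributing only to $v=0$); second, to make the $O(\varepsilon^2)$ bound rigorous you need the full tail control on $\psi$ and $\psi'$ from \cref{Auxilliary2}, not just the $s^4$ coefficient --- but you have already flagged this.
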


\begin{proof}
    We first treat the term $I\!I_{44}$, and later explain the changes for the other terms.
    We have:
    \begin{align*}
        I\!I_{44} &= \int_{B_4 \times B_4} \delta(1 - \frac{1 -  a(\theta')}{1 - a(\theta)}) \lvert f(\theta)\rvert\lvert f(\theta')\rvert \, \mathrm{d}\mathcal{H}^2_H(\theta) \, \mathrm{d}\mathcal{H}^2_H(\theta')\\
        &= \int_{B_1 \times B_1} \delta(1 - \frac{1 -  a(c_4 + \theta')}{1 - a(c_4 + \theta)}) \lvert f(\theta)\rvert \lvert f(\theta')\rvert \, \mathrm{d}\mathcal{H}^2_H(\theta) \, \mathrm{d}\mathcal{H}^2_H(\theta')
    \end{align*} 
    We introduce polar coordinates $\theta = \theta(s, \alpha)$ as in \eqref{variables1} and write also $\theta' = \theta(t, \beta)$. With the definitions $h(s,t,\alpha, \beta) := (1 - a(c_4 + \theta'))/(1 - a(c_4 + \theta))$ and  $g(s, \alpha) := \lvert \theta\rvert^2\lvert f(\theta)\rvert$, we obtain by changing variables
    \begin{equation}
        \label{eq I44 rewritten}
        I\!I_{44} = \int_{0}^{2\pi} \int_{0}^{2\pi} \int_0^\varepsilon \int_0^\varepsilon \delta(1 - h(s,t,\alpha, \beta)) g(s,\alpha)g(t,\beta) \,\frac{\mathrm{d}s}{s} \, \frac{\mathrm{d}t}{t} \, \mathrm{d}\alpha \, \mathrm{d}\beta\,.
    \end{equation}
    Doing a Taylor expansion of $1 - a(c_4 + \theta)$ at $0$ yields (see \cref{Auxilliary2})
    \begin{equation}
        \label{hdef}
        h(s,t,\alpha, \beta) = \frac{t^2}{s^2} \frac{3 \sin^2(\beta) - \cos^2(\beta)}{3\sin^2(\alpha) - \cos^2(\alpha)} \frac{1 + \psi(t, \beta)}{1 + \psi(s, \alpha)}\,,
    \end{equation}
    where $\psi$ is a smooth function of $s, \alpha$, and $\psi(s, \alpha)= O(s^2)$.
    If the last factor in \eqref{hdef} was equal to $1$, then the inner two integrals in \eqref{eq I44 rewritten} would simplify to 
    $$
        \int_0^\infty g(s, \alpha)g(c(\alpha, \beta) s, \beta) \,\frac{\mathrm{d}s}{s} \,,
    $$
    for some constant $c(\alpha, \beta)$,
    which is easily estimated using Cauchy-Schwarz. The following is a perturbed version of this argument.

    Fix $\alpha$, $\beta$ and write $h(s,t) = h(s,t,\alpha, \beta)$.
    Let $s(t)$ be defined implicitly by $h(s(t), t) = 1$ (note that $s$ also depends on $\alpha$ and $\beta$). Then
    \begin{align}
        \int_0^\varepsilon \int_0^\varepsilon \delta(1 - h(s,t))  g(s,\alpha)g(t,\beta) \,\frac{\mathrm{d}s}{s}\,\frac{\mathrm{d}t}{t}&= \int_0^\varepsilon g(t,\beta)g(s(t),\alpha) \frac{1}{\lvert \partial_s h(s(t),t)\rvert } \frac{1}{s(t)t} \, \mathrm{d}t \nonumber\\
        &= \int_0^\varepsilon g(t,\beta)g(s(t),\alpha) \frac{1}{2 + s(t)\frac{\psi'(s(t),\alpha)}{1 + \psi(s(t),\alpha)}} \frac{1}{t} \, \mathrm{d}t\,. \label{eq g}
    \end{align}
    Here we used that 
    $$
        \partial_s h(s,t) = -h(s,t)(\frac{2}{s} + \frac{\psi'(s,\alpha)}{1 + \psi(s,\alpha)})
    $$
    and hence
    $$ -\partial_s h(s(t),t) = \frac{2}{s(t)} + \frac{\psi'(s(t),\alpha)}{1 + \psi(s(t),\alpha)}\,. $$
    Applying Cauchy-Schwarz, we obtain 
    \begin{equation}
        \label{eq GM integrals}
        \eqref{eq g} \leq \left(\int_0^\varepsilon g(t,\beta)^2 \frac{1}{2 + s(t)\frac{\psi'(s(t),\alpha)}{1 +\psi(s(t),\alpha)}} \frac{1}{t} \, \mathrm{d}t \right)^{1/2} \left(\int_0^\varepsilon  g(s(t),\alpha)^2 \frac{1}{2 + s(t)\frac{\psi'(s(t),\alpha)}{1 + \psi(s(t),\alpha)}} \frac{1}{t} \, \mathrm{d}t\right)^{1/2}\,.
    \end{equation}
    After substituting $s = s(t)$ in the second integral, its integrand becomes the same as in the first one, but with the roles of $(s, \alpha)$ and $(t,\beta)$ interchanged. 
    By \cref{Auxilliary2}, it holds for $s \leq 1/20$ that 
    $$
        \lvert \psi(s, \alpha)\rvert < 1/100 \quad \text{and} \quad \lvert \psi'(s, \alpha)\rvert \leq 1/10\,,
    $$
    giving
    $$
        \lvert s \frac{\psi'(s, \alpha)}{1 + \psi(s, \alpha)}\rvert \leq \frac{1}{198}\,.
    $$
    Thus, the factor in the integrals in \eqref{eq GM integrals} is bounded above by $198/395 < 101/200$. It follows that 
    \begin{multline*}
        I\!I_{44} \leq \frac{101}{200} \int_0^{2\pi} \int_0^{2\pi} \left(\int_0^\varepsilon g(t,\beta)^2 \frac{\mathrm{d}t}{t} \right)^{1/2} \left(\int_0^\varepsilon  g(s,\alpha)^2  \frac{\mathrm{d}s}{s} \right)^{1/2} \, \mathrm{d}\alpha \, \mathrm{d}\beta\\
        \leq \frac{101}{100}\pi \int_0^{2\pi} \int_0^\varepsilon \lvert g(s, \alpha)\rvert^2 \, \frac{\mathrm{d}t}{t} \, \mathrm{d}\alpha
        = \frac{101}{100}\pi \int_{B_1} \lvert \theta\rvert^2 \lvert f(\theta)\rvert^2 \, \mathrm{d} \mathcal{H}^2_H(\theta)\,.
    \end{multline*}
    For the other eight integrals the same estimate holds: By the argument in the proof of \cref{lem I est}, changing $c_4$ to some other $c_j$ only changes the expansion in \eqref{hdef} by a translation in $\alpha$ and $\beta$. Then the rest of the argument goes through exactly as for $I\!I_{44}$.
\end{proof}

\section{Technical estimates}
Here we prove the computational lemmas that were used in the main argument. 

We have the following explicit formula for $\rho$ (see \cite{Carneiro+2017}, Lemma 8):
\begin{equation}
    \label{rhoformula}
    \rho(r) = \frac{4}{r} \int_{A(r)}^1 \frac{\mathrm{d}u}{\sqrt{1 - u^2}\sqrt{\frac{(1-r)^2}{2r} + 1 - u}\sqrt{\frac{(3+r)(1-r)}{2r} + 1 + u}}
\end{equation}
with $A(r) = -1 + \max\{0, (3+r)(r-1)/(2r)\}$. From this we obtain the following asymptotic formula:

\begin{lemma}
\label{AsymptoticsRho}
Let $\rho$ be defined by $\rho(\lvert x\rvert) = \sigma* \sigma * \sigma(x)$. Then we have for all $r$ with $\lvert r - 1\rvert \leq 1/10$
\begin{align*}
    \lvert\rho(r) + 6 \log \lvert 1 - r\rvert - 12 \log 2\rvert \leq -22 \lvert r - 1\rvert \log \lvert r - 1\rvert + 23 \lvert r-1\rvert\,.
\end{align*}
\end{lemma}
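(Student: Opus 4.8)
Write $\delta = r-1$ and abbreviate $a = a(r) = \tfrac{(1-r)^2}{2r}$, $b = b(r) = \tfrac{(3+r)(1-r)}{2r}$, so that the integrand in \eqref{rhoformula} is $\bigl((1-u^2)(a+1-u)(b+1+u)\bigr)^{-1/2}$ and $A(r) = -1+\max\{0,-b\}$; note $|\delta|\le 1/10$ forces $|b|$ to be small, so that $b+2>0$ and $(a+2)(b+2)>0$. The plan is to first put $\rho(r)$ into closed form as a complete elliptic integral of the first kind $K(k) = \int_0^{\pi/2}(1-k^2\sin^2\theta)^{-1/2}\,\mathrm{d}\theta$, and then to substitute its classical expansion near the singular modulus. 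Applying to \eqref{rhoformula} the substitutions $u = \cos\phi$, then $\psi = \phi/2$ (using $1-\cos\phi = 2\sin^2(\phi/2)$ and $1+\cos\phi = 2\cos^2(\phi/2)$), then $t = \tan\psi$, one gets
\begin{equation*}
    \rho(r) \;=\; \frac{8}{r}\int_0^{T}\frac{\mathrm{d}t}{\sqrt{\bigl(a+(a+2)t^2\bigr)\bigl((b+2)+bt^2\bigr)}},
\end{equation*}
with $T=\infty$ when $r\le 1$ and, when $r>1$, $T = \sqrt{(b+2)/(-b)}$, the value at which $(b+2)+bt^2 = (-b)(T^2-t^2)$ vanishes (this is exactly where the endpoint $u=A(r)>-1$ is mapped). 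Factoring the two quadratics in $t$ and using the elementary evaluations $\int_0^\infty\bigl((t^2+p^2)(t^2+q^2)\bigr)^{-1/2}\mathrm{d}t = q^{-1}K\bigl(\sqrt{1-p^2/q^2}\bigr)$ for $q\ge p$ (via $t=p\tan\theta$) and, for $r>1$, $\int_0^T\bigl((t^2+p^2)(T^2-t^2)\bigr)^{-1/2}\mathrm{d}t = (T^2+p^2)^{-1/2}K\bigl(\sqrt{T^2/(T^2+p^2)}\bigr)$ (via $t=T\sin\theta$), a short computation shows that both cases collapse to the single identity
\begin{equation*}
    \rho(r) \;=\; \frac{8\,K(k)}{r\sqrt{D}}, \qquad 1-k^2 \;=\; \frac{|ab|}{D},
\end{equation*}
with $D = (a+2)(b+2)$ if $r\le 1$ and $D = (a+2)(b+2)+|ab|$ if $r>1$.

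From here I would expand everything in $\delta$. One has $a = \tfrac12\delta^2 + O(\delta^3)$, $b = -2\delta + O(\delta^2)$, $|ab| = \tfrac{(3+r)}{4r^2}|\delta|^3 = |\delta|^3 + O(\delta^4)$, hence $D = 4 + O(\delta)$, and since $|ab|\le|\delta|^3$ while $D$ stays near $4$ the complementary modulus is $k':=\sqrt{1-k^2} = \tfrac12|\delta|^{3/2}\bigl(1+O(\delta)\bigr)$. Thus $\tfrac{8}{r\sqrt D} = 4+O(\delta)$, and the classical power series of $K$ in $k'$, of the form $K(k) = \log(4/k') + O\bigl((k')^2\log(1/k')\bigr)$, has here a negligible error $O(\delta^3\log|\delta|)$, so $K(k) = \log(4/k') + O(\delta^3\log|\delta|) = 3\log 2 - \tfrac32\log|\delta| + O(\delta)$. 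Multiplying,
\begin{equation*}
    \rho(r) \;=\; 12\log 2 - 6\log|\delta| + O\bigl(\delta\log|\delta|\bigr),
\end{equation*}
which is the asserted asymptotic formula up to the explicit shape of the error.

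The only remaining work is to make that error quantitative with the stated constants $22$ and $23$, which I would do by carrying explicit bounds — not $O$-symbols — through the previous paragraph over the whole range $|\delta|\le 1/10$: explicit Taylor-remainder bounds for $a$, $b$, $D$ and for $4/k'$ as functions of $\delta$; an explicit bound of the classical form $|K(k)-\log(4/k')|\le C(k')^2\log(4/k')$ with an absolute constant $C$, which is harmless here because $(k')^2\le|\delta|^3$ and $\log(4/k')=O(\log|\delta|)$, so it contributes only $O(\delta^3\log|\delta|)$; and a final product estimate. I expect this bookkeeping to be the main obstacle, although it is routine: the dominant part of the error is the cross term between the $O(\delta)$ relative error of the prefactor $\tfrac{8}{r\sqrt D}$ and the $O(\log|\delta|)$ size of $K(k)$, whereas everything coming from the $K$-remainder or from the $|ab|$-correction to $D$ is of size $O(\delta^3\log|\delta|)$ and invisible at this precision; and since at the worst point $|\delta|=1/10$ the claimed right-hand side already exceeds $7$ while the genuine error is well below $1$, very generous constants go through. (If one prefers to avoid naming $K$, one can estimate the $t$-integral above directly, splitting $[0,T]$ at $t\approx 1$ into a region where $(b+2)+bt^2$ is bounded away from $0$ and a region where, after $t\mapsto 1/t$, the roles of $a$ and $b$ are reversed, and extracting the logarithms $-\log a$ and $-\log|b|$ by comparison with $\int\mathrm{d}t/\sqrt{a+2t^2}$ and the analogous integral in $b$; this is elementary but produces several irrational intermediate constants that must cancel, so the elliptic-integral route is cleaner.)
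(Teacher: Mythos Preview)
Your approach is correct and takes a genuinely different route. You reduce $\rho(r)$ to a complete elliptic integral $\rho(r)=\tfrac{8}{r\sqrt D}K(k)$ via the tangent half-angle substitution, then invoke $K(k)=\log(4/k')+O\bigl((k')^2\log(1/k')\bigr)$; since $k'\sim\tfrac12|\delta|^{3/2}$ and $\tfrac{8}{r\sqrt D}\sim 4$, the main term $12\log 2-6\log|\delta|$ falls out immediately. The paper in fact mentions this route---your identity is exactly the Pearson--Borwein formula quoted after the lemma---but declines it on the grounds that no proof of the identity could be located and that such a proof ``is indeed not short''; your three-substitution derivation shows it is actually short. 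The paper instead estimates \eqref{rhoformula} directly, handling $r<1$ and $r>1$ separately and extracting the logarithms $-\log a$ and $-\log|b|$ by hand through three elementary comparison lemmas (your parenthetical final paragraph essentially outlines that same strategy). The trade-off: your argument is structurally cleaner and explains the constants $6$ and $12\log 2$ at a glance, but you leave the numerical constants $22$ and $23$ as unexecuted bookkeeping, whereas the paper carries explicit bounds through every line. Your heuristic that the dominant error is the cross term $O(\delta)\cdot O(\log|\delta|)$ from the prefactor is right---its leading coefficient works out to about $3$, comfortably under $22$---so finishing is routine, but as written you have proved the asymptotic formula, not yet the stated inequality.
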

We have not tried to optimize the error in this estimate. We give an elementary, self-contained proof below. For an alternative proof one can use the identity (see  \cite{Pearson06} or \cite{Borwein+12})
\begin{equation*}
    \label{eq Pearson}
    \rho(x) = 
    \begin{cases}
        \frac{16}{\sqrt{(x+1)^3(3-x)}}K\left( \sqrt{\frac{16x}{(x+1)^3(3-x)}}\right) \quad &\text{if $0 \leq x < 1$}\\
        \frac{4}{\sqrt{x}} K\left( \sqrt{\frac{(x+1)^3(3-x)}{16x}}\right) \quad &\text{if $1 < x \leq 3$}\\
        0 \quad &\text{if $x > 3$}
    \end{cases}
\end{equation*}
where 
$$
    K(k) = \int_0^1 \frac{1}{\sqrt{1 - x^2}\sqrt{1 - k^2x^2}} \, \mathrm{d}x
$$
is the complete elliptic integral of the first kind, together
with known asymptotics for $K(k)$ as $k \nearrow 1$. Since we could not find a proof of \eqref{eq Pearson} in the literature, and its proof is indeed not short, we have decided to give the elementary argument instead. %We note, however, the correct asymptotic expansion of $\rho$ at $1$ is given by
%\begin{align*}
%    \rho(r) &=  - 6 \log(|1 - r|)+12 \log(2) + 3(r-1) \log(|1 - r|) + (r-1)(\frac{3}{2}- 6 \log 2)\\
%    &\quad\quad+ O((r-1)^2 \log|r-1|)\,.
%\end{align*} 

We first prove some auxilliary lemmas.

\begin{lemma}
    \label{Auxilliary1}
    For all $\delta > 0$ it holds that 
    $$
        0 \leq \int_0^1 \frac{1}{\sqrt{u}\sqrt{u + \delta}} \, \mathrm{d}u - \log(\frac{4}{\delta}) \leq \frac{1}{2} \delta\,.
    $$
\end{lemma}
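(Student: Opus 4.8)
The plan is to evaluate the integral in closed form and then reduce the two-sided estimate to elementary facts about the logarithm. First I would substitute $u = t^2$, so that $\mathrm{d}u = 2t\,\mathrm{d}t$ and
$$
    \int_0^1 \frac{\mathrm{d}u}{\sqrt{u}\sqrt{u+\delta}} = \int_0^1 \frac{2\,\mathrm{d}t}{\sqrt{t^2+\delta}} = 2\log\bigl(t+\sqrt{t^2+\delta}\bigr)\Big|_{0}^{1} = 2\log\bigl(1+\sqrt{1+\delta}\bigr) - \log\delta\,.
$$
The integrand is singular but integrable at $u = 0$, so to be fully rigorous one first integrates over $[\eta,1]$ and lets $\eta\to 0$. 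Since $\log(4/\delta) = 2\log 2 - \log\delta$, the quantity to be bounded equals exactly
$$
    \int_0^1 \frac{\mathrm{d}u}{\sqrt{u}\sqrt{u+\delta}} - \log\frac{4}{\delta} = 2\log\frac{1+\sqrt{1+\delta}}{2}\,.
$$

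The lower bound is then immediate: $\delta \geq 0$ forces $\sqrt{1+\delta}\geq 1$, hence $\tfrac12(1+\sqrt{1+\delta})\geq 1$ and the logarithm is nonnegative. For the upper bound I would apply the inequality $\log(1+x)\leq x$ with $x = \tfrac12(\sqrt{1+\delta}-1)\geq 0$, which gives $2\log\tfrac12(1+\sqrt{1+\delta}) \leq \sqrt{1+\delta}-1$; combining this with $\sqrt{1+\delta}\leq 1+\tfrac{\delta}{2}$ (valid since $(1+\tfrac{\delta}{2})^2 = 1+\delta+\tfrac{\delta^2}{4}\geq 1+\delta$) yields $2\log\tfrac12(1+\sqrt{1+\delta})\leq \tfrac{\delta}{2}$, as claimed.

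There is essentially no obstacle here; the only points needing a little care are the integrability at the endpoint $u=0$ and making sure the chain of elementary inequalities is applied in the correct direction. As an alternative to the second step one could set $g(\delta) := \tfrac{\delta}{2} - 2\log\tfrac12(1+\sqrt{1+\delta})$, observe $g(0)=0$, and check $g'(\delta) = \tfrac12 - \bigl((1+\sqrt{1+\delta})\sqrt{1+\delta}\bigr)^{-1} \geq 0$ since $w + w^2 \geq 2$ for $w = \sqrt{1+\delta}\geq 1$, but the $\log(1+x)\leq x$ route is shorter.
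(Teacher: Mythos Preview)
Your proof is correct and follows essentially the same route as the paper: both evaluate the integral to $2\log(1+\sqrt{1+\delta})-\log\delta$ and then bound $2\log\tfrac{1+\sqrt{1+\delta}}{2}$ between $0$ and $\tfrac{\delta}{2}$. The only cosmetic difference is that the paper handles the upper bound via the mean value theorem (which is exactly the alternative you sketch at the end), whereas you use $\log(1+x)\le x$ together with $\sqrt{1+\delta}\le 1+\tfrac{\delta}{2}$.
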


\begin{proof}
    We have 
    $$
        \int_0^1 \frac{1}{\sqrt{u}\sqrt{u + \delta}} \, \mathrm{d}u  = -\log(\delta) + 2 \log(1 + \sqrt{1 + \delta})\,.
    $$
    Furthermore, by the mean value theorem, there exists $0 < \delta' < \delta$ such that
    \begin{align*}
        \log(1 + \sqrt{1 + \delta}) = \log(2) + \delta g(\delta')
    \end{align*}
    where 
    $$
        0 < g(\delta) = \frac{1}{2(1 + \sqrt{1 + \delta})\sqrt{1 + \delta}} \leq \frac{1}{4}
    $$
    is the derivative of $\log(1 + \sqrt{1 + \delta})$. 
\end{proof}

\begin{lemma}
    \label{auxilliary5}
    For all $0 < a,b < 1$, we have:
    $$
        \left\lvert \int_0^1 \frac{1}{\sqrt{1 - x^2}\sqrt{a + 1 - x}\sqrt{b + 1 +x}} \, \mathrm{d}x - \int_0^1 \frac{1}{\sqrt{1 - x^2}\sqrt{a + 1 - x}\sqrt{1 +x}} \, \mathrm{d}x \right\rvert \leq \frac{b}{2}(\log(\frac{4}{a}) + \frac{a}{2})\,.
    $$
\end{lemma}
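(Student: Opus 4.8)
The plan is to combine the two integrals under a single integral sign, bound pointwise the difference of the two reciprocal square roots of the last factor, and reduce what remains to \cref{Auxilliary1}. The quantity to be estimated equals
\[
D := \int_0^1 \frac{1}{\sqrt{1-x^2}\,\sqrt{a+1-x}}\left(\frac{1}{\sqrt{1+x}} - \frac{1}{\sqrt{b+1+x}}\right) \mathrm{d}x ,
\]
and since $b>0$ makes the integrand nonnegative, it holds that $\lvert D\rvert = D$, so it suffices to bound $D$ from above.

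The key pointwise estimate will be
\[
\frac{1}{\sqrt{1+x}} - \frac{1}{\sqrt{b+1+x}} \le \frac{b}{2(1+x)^{3/2}} \qquad (0 \le x \le 1),
\]
which I would obtain from the mean value theorem applied to $t \mapsto t^{-1/2}$ on $[1+x,\,1+x+b]$ together with the monotonicity of $t \mapsto t^{-3/2}$. Plugging this in, using $\sqrt{1-x^2} = \sqrt{1-x}\,\sqrt{1+x}$, and discarding the factor $(1+x)^{-2} \le 1$, one gets
\[
D \le \frac{b}{2}\int_0^1 \frac{\mathrm{d}x}{\sqrt{1-x}\,(1+x)^2\,\sqrt{a+1-x}} \le \frac{b}{2}\int_0^1 \frac{\mathrm{d}x}{\sqrt{1-x}\,\sqrt{a+1-x}} .
\]
The substitution $u = 1-x$ turns the last integral into $\int_0^1 \frac{\mathrm{d}u}{\sqrt{u}\,\sqrt{u+a}}$, which by \cref{Auxilliary1} applied with $\delta = a \in (0,1)$ is at most $\log(4/a) + a/2$. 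Chaining the inequalities gives $\lvert D\rvert \le \frac{b}{2}\bigl(\log(4/a) + \tfrac{a}{2}\bigr)$, as claimed.

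I do not expect a genuine obstacle here; the only place needing a little care is producing the constant $\tfrac12$ in the pointwise bound cleanly, which is why I prefer the mean value theorem form over rationalizing the numerator. One should also check that throwing away $(1+x)^{-2}$ leaves exactly the integrand of \cref{Auxilliary1} after the substitution — it does, since the factors $\sqrt{1-x}$ and $\sqrt{a+1-x}$ are already in the right shape.
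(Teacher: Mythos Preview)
Your proof is correct and follows essentially the same approach as the paper: bound the difference $(1+x)^{-1/2}-(b+1+x)^{-1/2}$ via the mean value theorem, drop the remaining factor of a power of $(1+x)^{-1}$, and reduce to \cref{Auxilliary1} after the substitution $u=1-x$. The only cosmetic difference is that you keep the sharper pointwise bound $\tfrac{b}{2}(1+x)^{-3/2}$ before discarding $(1+x)^{-2}\le 1$, whereas the paper uses $(1+x)^{-3/2}\le 1$ immediately to get $\tfrac{b}{2}$ and then drops $(1+x)^{-1/2}\le 1$; the resulting integral and final bound are identical.
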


\begin{proof}
    By the mean value theorem, we have for all $x \geq 0$
    $$
        \lvert (b + 1 + x)^{-1/2} - (1 + x)^{-1/2}\rvert \leq \frac{1}{2}b\,.
    $$
    Hence the left hand side of the claimed inequality is estimated by 
    $$
        \frac{b}{2}\int_0^1 \frac{1}{\sqrt{1-x}\sqrt{a + 1 - x}} \, \mathrm{d}x \leq \frac{b}{2}(\log(\frac{4}{a}) + \frac{a}{2})\,,
    $$
    where we applied \cref{Auxilliary1}.
\end{proof}

\begin{lemma}
    \label{auxilliary6}
    For all $1 > a > 0$, we have 
    $$
        \left\lvert \int_0^1 \frac{1}{(1 +x)\sqrt{1 -x}\sqrt{a + 1 - x}} \, \mathrm{d}x - \frac{1}{2} \log(\frac{8}{a})\right\rvert \leq \frac{1}{2}a \log(1 + \frac{1}{a})\,.
    $$
\end{lemma}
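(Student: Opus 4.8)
The plan is to reduce this integral to the one estimated in \cref{Auxilliary1} plus an elementary logarithm, and then to bound the leftover error by a crude pointwise comparison. First I would substitute $u = 1-x$, which turns the integral into
$$
    \int_0^1 \frac{\mathrm{d}u}{(2-u)\sqrt{u}\sqrt{a+u}}\,.
$$
Next I would use the partial fraction identity $\frac{1}{2-u} = \frac12 + \frac{u}{2(2-u)}$ to split this as
$$
    \frac12 \int_0^1 \frac{\mathrm{d}u}{\sqrt{u}\sqrt{a+u}} + \frac12 \int_0^1 \frac{\sqrt{u}}{(2-u)\sqrt{a+u}}\,\mathrm{d}u\,.
$$
By \cref{Auxilliary1} with $\delta = a$, the first term equals $\frac12\log(4/a) + r_1$ with $0 \le r_1 \le a/4$.

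For the second term I would replace $\sqrt{u}/\sqrt{a+u}$ by $1$. Since $\int_0^1 \frac{\mathrm{d}u}{2-u} = \log 2$, this produces the main contribution $\frac12\log 2$, which together with $\frac12\log(4/a)$ gives exactly the claimed $\frac12\log(8/a)$. The error incurred is
$$
    r_2 := \frac12 \int_0^1 \frac{1}{2-u}\Bigl(1 - \frac{\sqrt{u}}{\sqrt{a+u}}\Bigr)\,\mathrm{d}u\,,
$$
and using $0 < \frac{1}{2-u} \le 1$ on $[0,1]$ together with
$$
    0 \le 1 - \frac{\sqrt{u}}{\sqrt{a+u}} = \frac{a}{\sqrt{a+u}\,(\sqrt{a+u}+\sqrt{u})} \le \frac{a}{a+u}\,,
$$
I would conclude $0 \le r_2 \le \frac12\int_0^1 \frac{a}{a+u}\,\mathrm{d}u = \frac12 a\log(1 + \tfrac1a)$. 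Altogether the integral equals $\frac12\log(8/a) + r_1 - r_2$.

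It then remains to verify $\lvert r_1 - r_2\rvert \le \frac12 a\log(1+\tfrac1a)$. This is the one step needing care: a direct triangle inequality would leave the spurious term $a/4$, so instead one distinguishes cases by the sign of $r_1 - r_2$. If $r_1 - r_2 \ge 0$ then $r_1 - r_2 \le r_1 \le a/4$, and $a/4 \le \frac12 a\log(1+\tfrac1a)$ because $\log(1+\tfrac1a) > \log 2 > \tfrac12$ for $0 < a < 1$; if $r_1 - r_2 < 0$ then $\lvert r_1 - r_2\rvert \le r_2 \le \frac12 a\log(1+\tfrac1a)$. In both cases the desired bound follows, so the only genuine obstacle is this bookkeeping with the two nonnegative, oppositely signed error terms.
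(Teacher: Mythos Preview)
Your proof is correct and essentially the same as the paper's. After the substitution $u=1-x$, both arguments reach the identical three-term decomposition
\[
\frac{1}{(2-u)\sqrt{u}\sqrt{a+u}}=\frac{1}{2\sqrt{u}\sqrt{a+u}}+\frac{1}{2(2-u)}-\frac{a}{2(2-u)\sqrt{a+u}(\sqrt{u}+\sqrt{a+u})}\,;
\]
your route via $\tfrac{1}{2-u}=\tfrac12+\tfrac{u}{2(2-u)}$ and then replacing $\sqrt{u}/\sqrt{a+u}$ by $1$ simply recovers this identity in two steps, and your error term $r_2$ coincides verbatim with the paper's third integral. The paper then dispatches the final estimate with the single phrase ``note that the errors have opposite signs''; your case split on the sign of $r_1-r_2$ (together with $\log(1+1/a)>\log 2>1/2$ for $0<a<1$) is exactly the content of that remark, spelled out explicitly.
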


\begin{proof}
    We have with $v = 1-x$
    $$
        \int_0^1 \frac{1}{(1 +x)\sqrt{1 -x}\sqrt{a + 1 - x}} \, \mathrm{d}x = \int_0^1 \frac{1}{(2 - v)\sqrt{v}\sqrt{a + v}} \, \mathrm{d}v
    $$
    which can be expanded to equal
    $$
        \frac{1}{2}\int_0^1 \frac{1}{\sqrt{v}\sqrt{a + v}} \, \mathrm{d}v + \frac{1}{2}\int_0^1 \frac{1}{2 - v} \, \mathrm{d}v - \frac{a}{2} \int_0^1 \frac{1}{(2 - v)\sqrt{a + v}(\sqrt{v} + \sqrt{a + v})} \, \mathrm{d}v\,.
    $$
    Computing the second integral and using \cref{Auxilliary1} for the first one yields the main term $\log(8/a)/2$. For the error estimate we combine \cref{Auxilliary1} and the bound
    $$
        \int_0^1 \frac{1}{(2 - v)\sqrt{a + v}(\sqrt{v} + \sqrt{a + v})} \, \mathrm{d}v \leq \int_0^1 \frac{1}{v + a}\,\mathrm{d}v =  \log(1 + \frac{1}{a}) \,,
    $$
    and note that the errors have opposite signs.
\end{proof}

\begin{proof}[Proof of \cref{AsymptoticsRho}]
    We start with the case $r = 1 - \varepsilon < 1$. By \eqref{rhoformula}, we have
    $$
        \frac{1 - \varepsilon}{4} \rho(1 - \varepsilon) = 
        \int_{-1}^1 \frac{1}{\sqrt{1 - u^2}\sqrt{\frac{\varepsilon^2}{2 - 2\varepsilon} + 1 - u}\sqrt{\frac{(4-\varepsilon)\varepsilon}{2 - 2\varepsilon} + 1 + u}} \, \mathrm{d}u\,.
    $$
    Combining \cref{auxilliary5} and \cref{auxilliary6} with $a = \varepsilon^2/(2 - 2\varepsilon)$ and $b = (4 - \varepsilon)\varepsilon/(2- 2\varepsilon)$, we obtain for this integral:
    $$
        = \frac{1}{2}(\log(\frac{8}{a}) + \log(\frac{8}{b})) + E = 3 \log(2)  - \frac{3}{2} \log(\varepsilon) - \log(2 - 2\varepsilon) + \frac{1}{2} \log(4 -\varepsilon) + E
    $$
    with 
    \begin{equation}
        \label{eq error estimate}
        \lvert E\rvert \leq \frac{1}{2}(b\log(\frac{4}{a}) + a\log(\frac{4}{b}) + ab + a \log(1 + \frac{1}{a}) + b \log(1 + \frac{1}{b}))\,.
    \end{equation}
    It is easy to see that 
    $$
        \lvert\frac{1}{2} \log(4 - \varepsilon) - \log(2 - 2\varepsilon)\rvert \leq \frac{\varepsilon}{2}\,.
    $$
    Further, one verifies that, when $0 < \varepsilon \leq 1/10$,
    $$
        a \leq \frac{1}{18}\varepsilon\,, \ b \leq \frac{19}{9} \varepsilon\,, \ \log(\frac{4}{a}) \leq 3 \log(2) - 2 \log(\varepsilon)\,, \ \log(\frac{4}{b}) \leq \log(2) - \log(\varepsilon)
    $$
    and 
    $$
        \log(1 + \frac{1}{a}) \leq \log(2) - 2\log(\varepsilon)\,, \ \log(1 + \frac{1}{b}) \leq -\log(\varepsilon)\,.
    $$
    Using this, one can check that
    $$
        E \leq \frac{13}{4} \varepsilon \log(\frac{1}{\varepsilon}) + \frac{5}{2} \varepsilon\,.
    $$
    To summarize, we have shown that 
    $$
        \left\lvert \frac{1 - \varepsilon}{4} \rho(1 - \varepsilon) - 3 \log(2) + \frac{3}{2} \log(\varepsilon) \right\rvert \leq \frac{13}{4} \varepsilon \log(\frac{1}{\varepsilon}) + 3 \varepsilon\,.
    $$
    We multiply by $4/(1 - \varepsilon)$, and use that $\lvert 4/(1 - \varepsilon) - 4\rvert \leq 40/9 \varepsilon$ to obtain
    $$
        \lvert \rho(1 - \varepsilon) - 12 \log(2) + 6 \log(\varepsilon)\rvert \leq 22 \varepsilon \log(\frac{1}{\varepsilon}) +  23\varepsilon\,.
    $$

    \noindent Now we turn to the case $r = 1 + \varepsilon > 1$. There we have
    $$
        \rho(1 + \varepsilon) = \frac{4}{1 + \varepsilon} \int_{-1 + \frac{(4 + \varepsilon)\epsilon}{2 + 2\varepsilon}}^1 \frac{1}{\sqrt{1 - u^2}\sqrt{\frac{\varepsilon^2}{2 + 2\varepsilon} + 1 - u}\sqrt{-\frac{(4+\varepsilon)\varepsilon}{2 + 2\varepsilon} + 1 + u}} \,\mathrm{d}u
    $$
    $$
        = \frac{16}{4 - \varepsilon^2} \int_{-1}^1 \frac{1}{\sqrt{1 - v^2}\sqrt{\frac{2\varepsilon^2}{4 - \varepsilon^2} + 1 - v}\sqrt{\frac{8\varepsilon}{4- \varepsilon^2} + 1 + v}} \, \mathrm{d}v\,.
    $$
    We first approximate the integral. We can argue as in the case $r < 1$, now with $a = 2\varepsilon^2/(4 - \varepsilon^2)$ and $b = 8\varepsilon/(4 - \varepsilon^2)$. The main term is easily seen to be the same as in the case $r < 1$, and the error is bounded by
    $$
        -\log(1 - \frac{\varepsilon^2}{4}) + E \leq \frac{\varepsilon}{40} + E\,,
    $$
    with $E$ satisfying \eqref{eq error estimate}. Now we have
    $$
        a \leq \frac{1}{15}\varepsilon\,, \ b \leq \frac{800}{399} \varepsilon\,, \ \log(\frac{4}{a}) \leq 3 \log(2) - 2\log(\varepsilon)\,, \ \log(\frac{4}{b}) \leq \log(2) - \log(\varepsilon)
    $$
    and
    $$
        \log(1 + \frac{1}{a}) \leq \log(\frac{201}{100}) - 2\log(\varepsilon)\,, \ \log(1 + \frac{1}{b}) \leq -\log(\varepsilon)\,.
    $$
    Using this, we obtain
    $$
        \lvert E\rvert + \frac{\varepsilon}{40} \leq  \frac{13}{4} \varepsilon \log(\frac{1}{\varepsilon}) + \frac{9}{4}\varepsilon\,.
    $$
    In other words, it holds that 
    $$
        \left\lvert \frac{4 - \varepsilon^2}{16} \rho(1 + \varepsilon) - 3\log(2) + \frac{3}{2} \log(\varepsilon)\right\rvert \leq \frac{13}{4} \varepsilon \log(\frac{1}{\varepsilon}) + \frac{9}{4}\varepsilon\,.
    $$
    We multiply by $16/(4 - \varepsilon^2)$ and use that $\lvert 16/(4 - \varepsilon^2) - 4\rvert \leq 40/399 \varepsilon$ to obtain
    $$
        \lvert \rho(1 + \varepsilon) - 12 \log(2) + 6 \log(\varepsilon)\rvert \leq 14 \varepsilon \log(\frac{1}{\varepsilon}) + 9\varepsilon\,.
    $$
    This completes the proof.
\end{proof}

\begin{lemma}
    \label{Auxilliary2}
    Let $\theta$ be given by \eqref{variables1}. Then it holds that 
    $$
        a(c_4 + \theta) - 1 = s^2(3\sin^2(\alpha) - \cos^2(\alpha)) (1 + \psi(s, \alpha))\,,
    $$
    where $\psi(s, \alpha)$ is a smooth function satisfying the following estimates:
    \begin{align*}
        \lvert \psi(s, \alpha)\rvert & \leq \frac{7}{24}s^2 + \frac{17}{720} s^4 +  s^6 e^{\sqrt{2}s}\\
        \lvert \psi'(s, \alpha)\rvert &\leq \frac{14}{24} s + \frac{17}{180} s^3 + 2 s^5 e^{\sqrt{2}s}\,.
    \end{align*}
\end{lemma}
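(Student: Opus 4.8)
The plan is to derive an exact trigonometric formula for $a(c_4+\theta)-1$ and read $\psi$ off from it. Set $p:=\theta_1-\theta_2$, $q:=\theta_2-\theta_3$, $r:=\theta_3-\theta_1$, so $p+q+r=0$. Since $c_4=(-\pi/3,-\pi/3,2\pi/3)$ has pairwise coordinate differences $0$, $-\pi$ and $\pi$, identity \eqref{eq a cos} gives $a(c_4+\theta)-1 = 2 + 2\cos p - 2\cos q - 2\cos r$. Using $p=-(q+r)$ and the addition theorem, this equals $2\big((1-\cos q)(1-\cos r)-\sin q\sin r\big)$; applying the half-angle identities and $\sin(q/2)\sin(r/2)-\cos(q/2)\cos(r/2)=-\cos((q+r)/2)=-\cos(p/2)$ then collapses it to
$$
a(c_4+\theta)-1 = -8\sin(q/2)\sin(r/2)\cos(p/2).
$$

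From \eqref{variables1} I compute $p=\sqrt2\,s\cos\alpha$, $q=\frac{s}{\sqrt2}(\sqrt3\sin\alpha-\cos\alpha)$ and $r=-\frac{s}{\sqrt2}(\sqrt3\sin\alpha+\cos\alpha)$, so that $qr=-\frac{s^2}{2}(3\sin^2\alpha-\cos^2\alpha)$, $p^2=2s^2\cos^2\alpha$, $q^2+r^2=s^2(3\sin^2\alpha+\cos^2\alpha)$ and $|p|,|q|,|r|\le\sqrt2\,s$. Writing $\operatorname{sinc}(t):=\sin(t)/t$ and $\sin(q/2)=(q/2)\operatorname{sinc}(q/2)$ (and similarly for $r$), the display above becomes $a(c_4+\theta)-1=-2qr\,\operatorname{sinc}(q/2)\operatorname{sinc}(r/2)\cos(p/2)=s^2(3\sin^2\alpha-\cos^2\alpha)\operatorname{sinc}(q/2)\operatorname{sinc}(r/2)\cos(p/2)$. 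Hence the asserted factorization holds with
$$
\psi(s,\alpha)=\operatorname{sinc}(q/2)\,\operatorname{sinc}(r/2)\,\cos(p/2)-1,
$$
which is real-analytic in $(s,\alpha)$ since $\operatorname{sinc}$ and $\cos$ are entire and $p,q,r$ are polynomials in $s$ with trigonometric coefficients; in particular the zeros of $3\sin^2\alpha-\cos^2\alpha$ cause no trouble, since no division takes place.

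For the size estimates I use the elementary inequalities $0\le 1-\operatorname{sinc}(t)\le t^2/6$, $0\le 1-\cos t\le t^2/2$, $|\operatorname{sinc}(t)|\le 1$ and $|\operatorname{sinc}'(t)|\le|t|/3$, valid for every real $t$ (each follows from $|\sin u|\le|u|$ after one or two integrations). Since $\operatorname{sinc}(q/2)\operatorname{sinc}(r/2)\cos(p/2)\le 1$ we get $|\psi|=1-\operatorname{sinc}(q/2)\operatorname{sinc}(r/2)\cos(p/2)$; putting $a=1-\operatorname{sinc}(q/2)$, $b=1-\operatorname{sinc}(r/2)$, $c=1-\cos(p/2)$, all $\ge 0$ with $a\le q^2/24$, $b\le r^2/24$, $c\le p^2/8$, and expanding $1-(1-a)(1-b)(1-c)=a+b+c-ab-ac-bc+abc\le a+b+c+abc$, the identities for $p^2$ and $q^2+r^2$ give
$$
|\psi|\le\frac{q^2+r^2}{24}+\frac{p^2}{8}+\frac{p^2q^2r^2}{4608}=\frac{s^2(3\sin^2\alpha+7\cos^2\alpha)}{24}+\frac{p^2q^2r^2}{4608}\le\frac{7}{24}s^2+\frac{s^6}{576},
$$
which is already stronger than claimed. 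For $\psi'=\partial_s\psi$, write $p=s\hat p$, $q=s\hat q$, $r=s\hat r$ with $|\hat p|,|\hat q|,|\hat r|\le\sqrt2$; differentiating the product and using $|\operatorname{sinc}'(t)|\le|t|/3$, $|\sin t|\le|t|$ and $|\operatorname{sinc}|,|\cos|\le 1$, the three terms are bounded by $\frac{s\hat q^2}{12}+\frac{s\hat r^2}{12}+\frac{s\hat p^2}{4}=\frac{s(3\sin^2\alpha+7\cos^2\alpha)}{12}\le\frac{14}{24}s$, again stronger than claimed.

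The whole argument hinges on the identity in the first paragraph, and spotting it is the only real obstacle; thereafter everything is routine. One could instead work from the power series $2\sum_{k\ge1}\frac{(-1)^k}{(2k)!}(p^{2k}-q^{2k}-r^{2k})$: here one has to observe that $p^{2k}-q^{2k}-r^{2k}$ is divisible by $qr$ (because $p^2=q^2+r^2+2qr$) in order to factor out $3\sin^2\alpha-\cos^2\alpha$. That route is more laborious and is what naturally produces the $s^4$ and $s^6e^{\sqrt2 s}$ tail terms appearing in the statement.
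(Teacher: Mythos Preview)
Your argument is correct and in fact yields bounds sharper than those stated. The route, however, differs from the paper's. The paper expands $a(c_4+\theta)-1$ as the cosine series $2\sum_{k\ge1}\frac{(-1)^k}{(2k)!}\big((\theta_1-\theta_2)^{2k}-(\theta_1-\theta_3)^{2k}-(\theta_2-\theta_3)^{2k}\big)$, observes that each homogeneous polynomial $P_{2k}(\sin\alpha,\cos\alpha)$ vanishes on the lines $\sqrt3 X\pm Y=0$ and hence is divisible by $3X^2-Y^2$, and then bounds the quotients $Q_{2k}$ term by term (computing $Q_4,Q_6$ exactly and estimating $Q_{2k}$ for $k\ge4$ via a Taylor remainder argument). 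This is exactly the power-series route you sketch in your final paragraph, and it is what generates the $\frac{17}{720}s^4$ and $s^6e^{\sqrt2 s}$ tails. Your closed-form identity $a(c_4+\theta)-1=-8\sin(q/2)\sin(r/2)\cos(p/2)$ bypasses the series entirely: the factorization by $3\sin^2\alpha-\cos^2\alpha$ becomes automatic through $qr=-\tfrac{s^2}{2}(3\sin^2\alpha-\cos^2\alpha)$, and the elementary inequalities $1-\operatorname{sinc}(t)\le t^2/6$, $1-\cos t\le t^2/2$, $|\operatorname{sinc}'(t)|\le|t|/3$ immediately give the leading constants $7/24$ and $14/24$ with only $O(s^6)$ corrections. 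Your approach is shorter, more transparent about why the factorization exists, and produces tighter estimates; the paper's approach is more mechanical but does not require spotting the product formula.
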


\begin{proof}
    By the definition of $h$, the trigonometric identities and the Taylor expansion of $\cos$, we have 
    \begin{align}
        a(c_4 + \theta)-1 
        &= a( (0,0,\pi) + \theta) - 1\nonumber\\
        &= (\cos(\theta_1) + \cos(\theta_2) - \cos(\theta_3))^2 + (\sin(\theta_1) + \sin(\theta_2) - \sin(\theta_3))^2  -1\nonumber\\    
        &= 2 + 2\cos(\theta_1 - \theta_2) - 2 \cos(\theta_1 - \theta_3) - 2\cos(\theta_2 - \theta_3)\nonumber\\
        &= 2\sum_{k = 1}^\infty \frac{(-1)^k}{(2k)!} ((\theta_1 - \theta_2)^{2k} - (\theta_1 - \theta_3)^{2k} - (\theta_2 - \theta_3)^{2k})\label{PolDef}\\
        &=: \sum_{k = 1}^\infty s^{2k} \frac{(-1)^k}{(2k)!}P_{2k}(\sin(\alpha), \cos(\alpha))\,.\nonumber
    \end{align}
    It follows from \eqref{PolDef} that each $P_{2k}$ vanishes when $\theta_1 = \theta_3$ and when $\theta_2 = \theta_3$, which is equivalent to $\alpha = \pm \pi/6$, or to $\cos(\alpha) = \pm \sqrt{3} \sin(\alpha)$. Hence, the homogeneous polynomial $P_{2k}(X, Y)$ vanishes on the lines $\sqrt{3}X+ Y = 0$ and $\sqrt{3}X - Y = 0$. We conclude that for all $k$, the factor $3X^2 - Y^2$ divides $P_{2k}(X,Y)$.
    Define $Q_{2k}$ by 
    $$
        Q_{2k}(X,Y)(3X^2 - Y^2) = (-1)^kP_{2k}(X,Y)\,.
    $$
    Then we have, using that $Q_{2} = 1$:
    \begin{align*}
        a(c_4 + \theta) - 1 = s^2 (3\sin^2(\alpha) - \cos^2(\alpha)) (1 + \psi(s, \alpha))
    \end{align*}
    where $\psi$ is defined by
    $$
        \psi(s, \alpha) = \sum_{k = 2}^\infty s^{2k-2} \frac{1}{(2k)!} Q_{2k}(\cos(\alpha), \sin(\alpha))\,.
    $$    

    Now we fix $k$ and estimate $p(\alpha) := P_{2k}(\sin(\alpha), \cos(\alpha))$ and $q(\alpha) := Q_{2k}(\sin(\alpha), \cos(\alpha))$. By \eqref{variables1}, we have that 
    $$
        \theta_1 - \theta_2 = \sqrt{2} \cos(\alpha)
    $$
    $$
        \theta_3 - \theta_1 = -\frac{1}{\sqrt{2}} \cos(\alpha) - \frac{\sqrt{3}}{\sqrt{2}} \sin(\alpha) = \sqrt{2} \cos(\alpha + \frac{2\pi}{3})
    $$
    $$
        \theta_2 - \theta_3 = -\frac{1}{\sqrt{2}} \cos(\alpha) + \frac{\sqrt{3}}{\sqrt{2}} \sin(\alpha) = \sqrt{2} \cos(\alpha -\frac{2\pi}{3})\,.
    $$
    Thus, by \eqref{PolDef},
    $$
        p(\alpha) = 2^{k+1} (-1)^k (\cos(\alpha)^{2k} - \cos(\alpha + \frac{2\pi}{3})^{2k} - \cos(\alpha - \frac{2\pi}{3})^{2k})\,,
    $$
    Taking derivatives, and noting that the terms inside the brackets are each at most $1$, we obtain:
    $$
        \lvert p(\alpha)\rvert  \leq 6 \cdot 2^k\,, \qquad
        \lvert p'(\alpha)\rvert \leq 12 k  2^k\,, \qquad
        \lvert p''(\alpha)\rvert \leq 24 k^2  2^k \,.
    $$
    Denote 
    $$
        q(\alpha) := \frac{p(\alpha)}{3\sin^2(\alpha) - \cos^2(\alpha)} = \frac{p(\alpha)}{(\sqrt{3}\sin(\alpha) - \cos(\alpha))(\sqrt{3}\sin(\alpha) + \cos(\alpha))}\,.
    $$
    If both factors $\lvert \sqrt{3}\sin(\alpha) \pm \cos(\alpha)\rvert$ are at least $1/2$, we have that 
    $$
        q(\alpha) \leq 24 \cdot 2^k \,.
    $$
    If not, then $\lvert \alpha - \pi/6\rvert < 1/5$ or $\lvert \alpha + \pi/6\rvert < 1/5$. Without loss of generality we are in the first case. Then, by Taylor's formula:
    $$
        \left\lvert \frac{p(\alpha)}{\alpha - \pi/6} - p'(\pi/6)\right\rvert \leq \frac{1}{2}|\alpha - \pi/6| \sup |p''| \leq \frac{1}{10} 24 k^2 2^k \,,
    $$
    hence 
    $$
        \left\lvert  \frac{p(\alpha)}{\alpha - \pi/6} \right\rvert \leq 15 k^2 2^k\,.
    $$
    Furthermore, since $\lvert \alpha - \pi/6\rvert \leq 1/5$,
    $$
        \left\lvert \frac{\alpha - \pi/6}{(\sqrt{3}\sin(\alpha) - \cos(\alpha))(\sqrt{3}\sin(\alpha) + \cos(\alpha))}\right\rvert \leq 2 \left\lvert \frac{\alpha - \pi/6}{\sqrt{3}\sin(\alpha) - \cos(\alpha)}\right\rvert \leq \frac{1/5}{\sin(1/5)} < 2\,.
    $$
    \begin{table}[t]
    \centering
    \begin{tabular}{|c|c | c |} 
     \hline
     $k$ & $P_{2k}(X,Y)$ & $Q_{2k}(X,Y)$ \\ 
     \hline
     $1$ & $-3X^2 + Y^2$ & $1$\\
     \hline
     $2$ & $-9X^4 - 18X^2 Y^2 + 7 Y^4$ & $-3X^2 - 7Y^2$\\
     \hline
     $3$ & $-\frac{1}{2}(27X^6 + 135X^4Y^2 + 45X^2Y^4 - 31Y^6)$ & $\frac{1}{2}(9X^4 + 48X^2Y^2 + 31Y^4)$\\
     \hline
    \end{tabular}
    \caption{The polynomials $P_{2k}$, $Q_{2k}$ for small values of $k$.}
    \label{table:1}
    \end{table}
    Multiplying the last two estimates, we conclude that $\lvert q\rvert\leq 30 k^2 2^k$. We also directly compute for small $k$:
    $$
        \lvert Q_4(\sin(\alpha), \cos(\alpha))\rvert = \lvert - 7\cos^2(\alpha) - 3\sin^2(\alpha)\rvert \leq 7
    $$
    and 
    $$
        \lvert Q_6(\sin(\alpha), \cos(\alpha))\rvert  = \frac{1}{2}\lvert 9 \sin^4(\alpha) + 48 \sin^2(\alpha)\cos^2(\alpha) + 31 \cos^2(\alpha)\rvert \leq \frac{5125}{312} < 17\,.
    $$
    Plugging in these estimates, we obtain
    $$
        \lvert \psi(s, \alpha)\rvert \leq \frac{7}{24}s^2 + \frac{17}{720} s^4 + \sum_{k = 4}^\infty \frac{60 k^2}{(2k)!}(\sqrt{2} s)^{2k-2} \leq \frac{7}{24}s^2 + \frac{17}{720} s^4 +  s^6 e^{\sqrt{2}s}\,
    $$
    and
    $$
        \lvert \psi'(s, \alpha)\rvert \leq \frac{14}{24} s + \frac{17}{180} s^3 + \sqrt{2}\sum_{k = 4}^\infty \frac{60k^2}{(2k - 1)!}(\sqrt{2}s)^{2k-3} \leq \frac{14}{24} s + \frac{17}{180} s^3 + 2 s^5 e^{\sqrt{2}s}\,,
    $$
    as claimed.
\end{proof}

\begin{lemma}
    \label{auxilliary3}
    Let $\theta$ be given by \eqref{variables1}. Then  for all $0 \leq s \leq 1/20$, we have
    \begin{align*}
        (a(c_4 + \theta) - 1)\rho(\sqrt{a(c_4 + \theta)})
        &=
        -12s^2 (3\sin^2 (\alpha) - \cos^2(\alpha)) \log(s) \\
        &\quad- 6 s^2 (3\sin^2 (\alpha) - \cos^2(\alpha)) \log \lvert 3\sin^2 (\alpha) - \cos^2(\alpha)\rvert\\
        &\quad+ 18 \log 2 \, s^2 (3 \sin^2(\alpha) - \cos^2(\alpha))+ E\,,
    \end{align*}
    with 
    $$
        \lvert E \rvert \leq - 180 s^4 \log s + 71 s^4\,.
    $$
\end{lemma}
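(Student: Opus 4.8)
The plan is to substitute the Taylor expansion $a(c_4+\theta)-1 = s^2(3\sin^2(\alpha)-\cos^2(\alpha))(1+\psi)$ of \cref{Auxilliary2} into the near-$1$ asymptotics of $\rho$ from \cref{AsymptoticsRho}. Throughout, abbreviate $w := 3\sin^2(\alpha)-\cos^2(\alpha) = 4\sin^2(\alpha)-1 \in [-1,3]$, so that $|w|\le 3$ and $|w\log|w||\le 3\log 3$, and recall from \cref{Auxilliary2} that $\psi = \psi(s,\alpha)$ satisfies $|\psi|\le\tfrac{7}{24}s^2 + \tfrac{17}{720}s^4 + s^6 e^{\sqrt{2} s}\le\tfrac{3}{10}s^2$, so that $|\psi|\le 10^{-3}$ and $1+\psi\in(\tfrac{999}{1000},\tfrac{1001}{1000})$ whenever $s\le 1/20$. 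If $w = 0$ then $a(c_4+\theta) = 1$ and both sides of the asserted identity vanish (for the main term because $w\log|w|\to 0$ as $w\to 0$); since both sides are continuous in $(s,\alpha)$, it suffices to treat $w\neq0$.

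I would then set $r := \sqrt{a(c_4+\theta)}$, so that $r^2-1 = s^2 w(1+\psi)$, $|r^2-1|\le 3.003\,s^2$, $a(c_4+\theta)>0$, and $|r-1| = |r^2-1|/(r+1) < 1/10$ for $s\le 1/20$. Hence \cref{AsymptoticsRho} applies and gives
\[
\rho(r) = -6\log|1-r| + 12\log 2 + R, \qquad |R|\le -22|r-1|\log|r-1| + 23|r-1| ,
\]
and when $w > 0$ one has $r > 1$, for which the proof of \cref{AsymptoticsRho} furnishes the sharper bound $|R|\le -14|r-1|\log|r-1| + 9|r-1|$. Substituting $|1-r| = s^2|w|(1+\psi)/(r+1)$ and writing $\log(r+1) = \log 2 + \eta$ with $\eta := \log\tfrac{r+1}{2}$, the first display rearranges to
\[
\rho(r) = -12\log s - 6\log|w| + 18\log 2 - 6\log(1+\psi) + 6\eta + R .
\]
Multiplying by $a(c_4+\theta)-1 = s^2 w(1+\psi)$ and expanding the factor $1+\psi$, the part of the product containing none of $\psi,\eta,R$ is precisely the claimed main term $s^2 w(-12\log s - 6\log|w| + 18\log 2)$, while the error is
\begin{align*}
E &= \underbrace{s^2 w\psi(-12\log s - 6\log|w| + 18\log 2)}_{E_1} \\
&\quad + \underbrace{6 s^2 w(1+\psi)(\eta - \log(1+\psi))}_{E_2} + \underbrace{s^2 w(1+\psi)R}_{E_3} .
\end{align*}

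It then remains to bound $E_1,E_2,E_3$ for $0 < s\le 1/20$. For $E_1$, use $|\psi|\le\tfrac{3}{10}s^2$, $|w|\le 3$ and $|w\log|w||\le 3\log 3$ to get $|E_1|\le\tfrac{3}{10}s^4\bigl(36(-\log s) + 18\log 3 + 54\log 2\bigr)$. For $E_2$, the mean value theorem applied to $u\mapsto\log(1+\sqrt{1+u})$ gives $|\eta|\le |u|/\bigl(2\sqrt{1-|u|}(1+\sqrt{1-|u|})\bigr)\le 0.26\,|u|$ with $u := s^2 w(1+\psi)$, $|u|\le 3.003\,s^2$, together with $|\log(1+\psi)|\le|\psi|/(1-|\psi|)$, whence $|E_2|\le 20\,s^4$. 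For $E_3$, write $|r-1| = s^2|w|(1+\psi)/(r+1)$ and use that $t\mapsto -t\log t$ is increasing on $(0,1/e)$: when $w > 0$ one has $r+1\ge 2$ and $|w|\le 3$, so $|r-1|\le 1.51\,s^2 < 1/e$, and combined with the sharper bound for $R$ this yields $|E_3|\le -127\,s^4\log s + 15\,s^4$; when $w < 0$ one has $|w|\le 1$, so $|r-1|\le 0.51\,s^2$ and even the symmetric bound for $R$ gives $|E_3|\le -23\,s^4\log s + 20\,s^4$. Adding the three estimates yields $|E|\le -c_1 s^4\log s + c_2 s^4$ with $c_1 < 140$ and $c_2 < 60$, and since
\[
-180 s^4\log s + 71 s^4 - \bigl(-c_1 s^4\log s + c_2 s^4\bigr) = s^4\bigl((180-c_1)\log(1/s) + (71 - c_2)\bigr) > 0
\]
for all $0 < s\le 1/20$, the asserted bound $|E|\le -180 s^4\log s + 71 s^4$ follows.

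The main obstacle is keeping the numerical constants under control in the estimate of $E_3$: a short computation shows that the symmetric error bound of \cref{AsymptoticsRho} is, by itself, not quite strong enough to close the estimate near $\alpha = \pi/2$, where $w = 3$ and thus $|r-1|\sim\tfrac{3}{2}s^2$ is as large as possible, so one genuinely has to invoke the asymmetric refinement $|R|\le -14\varepsilon\log(1/\varepsilon) + 9\varepsilon$ valid for $r = 1+\varepsilon > 1$, which is established inside the proof of \cref{AsymptoticsRho}. The complementary region $w < 0$ is harmless because there $|w|\le 1$. Everything else is a direct substitution-and-triangle-inequality computation, in which the lower-order corrections carried by $\psi$, $\eta$ and $R$ are absorbed into the $s^4$ and $s^4\log(1/s)$ terms of the error after multiplication by $s^2 w(1+\psi)$.
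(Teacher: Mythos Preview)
Your proof is correct and follows the same strategy as the paper's: insert the expansion from \cref{Auxilliary2} into the asymptotics of \cref{AsymptoticsRho} and bound the resulting error terms. Your handling of $E_3=(r^2-1)R$ is actually more careful than the paper's: the paper asserts $|x^2-1|\le 2s^2$ and applies the symmetric remainder bound of \cref{AsymptoticsRho} directly, whereas you split according to the sign of $w$ and, for $w>0$, invoke the sharper estimate $|R|\le -14|r-1|\log|r-1|+9|r-1|$ established inside the proof of \cref{AsymptoticsRho}; since $w=3\sin^2\alpha-\cos^2\alpha$ ranges up to $3$, one in fact only has $|x^2-1|\le 3.003\,s^2$, so your case distinction is precisely what is needed to recover the stated constant $180$.
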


\begin{proof}
    By \cref{AsymptoticsRho}, it holds that
    \begin{align}
        (x^2 - 1)\rho(x)
        &= -6(x^2 - 1)\log\lvert x-1\rvert + 12 \log (2) (x^2 - 1) + (x^2 - 1)E_1\nonumber\\
        &= -6(x^2 - 1)\log\lvert x^2 - 1\rvert + 18 \log(2) (x^2 - 1) + (x^2 - 1)E_1 + 6(x^2 - 1) \log(1+ \frac{1}{2}(x - 1))\,,\label{eq1aux3}
    \end{align}
    where
    $$
        \lvert E_1 \rvert \leq -22 \lvert x - 1\rvert \log\lvert x-1\rvert + 23\lvert x-1\rvert \,. 
    $$
    Denote also the last term in \eqref{eq1aux3} by $E_2$.
    We set $x = \sqrt{a(c_4 + \theta)}$. \Cref{Auxilliary2} implies that $\lvert x-1\rvert \leq \lvert x^2 - 1\rvert  \leq 2 s^2$. Using this and monotonicity of $r \log r$, we obtain
    \begin{equation}
        \label{eq e1}
        \lvert(x^2 - 1)E_1\rvert \leq \lvert x^2 - 1\rvert(-22 \lvert x - 1\rvert \log \lvert x - 1\rvert + 23 \lvert x - 1\rvert ) \leq -176 s^4 \log(s)+ 32 s^4
    \end{equation}
    and
    \begin{equation}
        \label{eq e2}
        \lvert E_2 \rvert \leq 6\lvert x^2 - 1\rvert  \lvert \log(1+ \frac{1}{2}(x - 1))\rvert \leq 24 s^4\,.
    \end{equation}
    By \cref{Auxilliary2}, it holds that
    \begin{align}
        &\quad-6(x^2 -1) \log\lvert x^2 - 1\rvert\nonumber\\
        &= -6 s^2 (3 \sin^2(\alpha) - \cos^2(\alpha)) (1 + \psi(s, \alpha)) ( 2 \log(s) + \log(3\sin^2(\alpha) - \cos^2(\alpha)) + \log(1 + \psi(s,\alpha)))\nonumber\\
        &= -12 s^2 \log(s) (3 \sin^2(\alpha) - \cos^2(\alpha)) - 6s^2 (3 \sin^2(\alpha) - \cos^2(\alpha))  \log\lvert 3 \sin^2(\alpha) - \cos^2(\alpha)\rvert\label{eq m1}\\
        &\quad - 6 s^2 (3 \sin^2(\alpha) - \cos^2(\alpha)) \log(1 + \psi(s,\alpha)))\label{e2aux3}\\
        &\quad -6 s^2 \psi(s, \alpha) (3 \sin^2(\alpha) - \cos^2(\alpha)) ( 2 \log(s) + \log\lvert3\sin^2(\alpha) - \cos^2(\alpha)\rvert + \log(1 + \psi(s,\alpha)))\,. \label{e3aux3}
    \end{align}
    The term \eqref{e2aux3} bounded by $18 s^2 \lvert\psi(s,\alpha)\rvert \leq 6s^4$.
    The term \eqref{e3aux3} is bounded by 
    $$
        -18 s^2 \log(s) \lvert\psi(s, \alpha)\rvert + 4 s^2 \lvert \psi(s,\alpha)\rvert + 18 s^2 \psi(s, \alpha)^2 \leq -6 s^4 \log(s) + 2 s^4\,.
    $$
    For the second term in \eqref{eq1aux3}, we have
    \begin{equation}
        \label{eq m2}
        18 \log(2) (x^2 - 1)= 18 \log(2) s^2 (3\sin^2(\alpha) - \cos^2(\alpha))  + 18 \log(2) s^2 (3\sin^2(\alpha) - \cos^2(\alpha)) \psi(s, \alpha)\,,
    \end{equation}
    with the second term bounded by $27 \log(2) s^2 \lvert \psi(s, \alpha)\rvert \leq 9 \log(2) s^4$.
    Putting together the main terms \eqref{eq m1} and \eqref{eq m2}, and the estimates for the error terms \eqref{eq e1}, \eqref{eq e2}, \eqref{e2aux3}, \eqref{e3aux3} and in \eqref{eq m2}, one obtains the lemma.
\end{proof}

\begin{lemma}
    \label{auxilliary4}
    For all $\alpha$, it holds that 
    $$
        \sum_{j = 1}^3 (3\sin^2(\alpha +  \frac{2\pi j}{3}) - \cos^2(\alpha +  \frac{2\pi j}{3})) \log \lvert 3\sin^2(\alpha + \frac{2\pi j}{3}) - \cos^2(\alpha + \frac{2\pi j}{3})\rvert \leq 3 \log(3)\,.
    $$
\end{lemma}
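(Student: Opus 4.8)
The plan is to reduce the inequality to two completely elementary facts about
\[
  u_j := 3\sin^2\bigl(\alpha + \tfrac{2\pi j}{3}\bigr) - \cos^2\bigl(\alpha + \tfrac{2\pi j}{3}\bigr) = 1 - 2\cos\bigl(2\alpha + \tfrac{4\pi j}{3}\bigr),
\]
namely that $u_j \le 3$ for each $j$, and that $u_1 + u_2 + u_3 = 3$. The bound $u_j\le 3$ is immediate (e.g.\ from $u_j = 4\sin^2(\alpha+\tfrac{2\pi j}{3}) - 1$), and the sum identity follows from $\sum_{j=1}^3 \cos(2\alpha + \tfrac{4\pi j}{3}) = 0$. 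With these substitutions and the convention $0\log 0 := 0$, the assertion to prove is exactly $\sum_{j=1}^3 u_j \log|u_j| \le 3\log 3$. The point I want to stress is that \emph{only} these two constraints are needed; one should resist the temptation to drag along the full list of symmetric-function relations among the $u_j$.

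Next I would split into cases according to the signs of the $u_j$. If all three $u_j$ are nonnegative, then since each is at most $3$ we get directly $\sum_j u_j\log u_j \le \sum_j u_j \log 3 = 3\log 3$. Otherwise at least one $u_j$ is negative, and in fact exactly one: if two of them, say $u_2,u_3$, were negative, then $3 = u_1 + u_2 + u_3 < u_1 \le 3$, a contradiction. By symmetry of the sum we may assume $u_1 = -c$ with $c > 0$; then $u_2, u_3 \ge 0$ with $u_2 + u_3 = 3 + c$, and since each is $\le 3$ each in fact lies in $[c,3]$ (in particular $u_2,u_3 > 0$, so no further degeneracy arises here).

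The heart of the argument is then a one-variable convexity claim: for $c>0$ the function $g(t) := t\log t + (3+c-t)\log(3+c-t)$ satisfies $g(t) \le 3\log 3 + c\log c$ on $[c,3]$. Indeed $g$ is convex there (a convex function of $t$ plus a convex function precomposed with an affine map), so it attains its maximum at an endpoint, and one checks $g(c) = c\log c + 3\log 3 = g(3)$. Applying this with $t = u_2$, so that $3 + c - t = u_3$, gives $u_2\log u_2 + u_3\log u_3 \le 3\log 3 + c\log c$, and therefore
\[
  \sum_{j=1}^3 u_j\log|u_j| = -c\log c + u_2\log u_2 + u_3\log u_3 \le 3\log 3 .
\]
I do not expect a genuine obstacle: once the reformulation above is in place the only care needed is the sign bookkeeping ("exactly one negative") and checking that the degenerate configurations (a vanishing $u_j$) only occur in the all‑nonnegative case, where they are absorbed by the convention $0\log 0 = 0$. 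Tracking the equality case, $3\log 3$ is attained precisely when some $u_j$ equals $3$ and the other two vanish, i.e.\ $c = 0$; this is consistent with how the lemma is invoked in the proof of \cref{lem I est}.
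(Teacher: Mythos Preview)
Your proof is correct and takes a genuinely different route from the paper. The paper sets $a_j = u_j/3$, uses \emph{both} identities $\sum a_j = 1$ and $\sum a_j^2 = 1$, introduces auxiliary weights $b_j = (a_j + a_{j-1})/2 \ge 0$, rewrites $\sum a_j \log|a_j|$ as a $b_j$-weighted average, and applies Jensen's inequality together with an algebraic identity for $a_j a_{j-1}/a_{j-2}$. You instead use only the constraints $u_j \le 3$ and $\sum u_j = 3$: a sign analysis (at most one $u_j$ can be negative) reduces the problem to the one-variable convex function $g(t) = t\log t + (3+c-t)\log(3+c-t)$ on $[c,3]$, whose maximum at the endpoints equals $3\log 3 + c\log c$. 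Your argument is more elementary and in fact proves the stronger statement that $\sum u_j \log|u_j| \le 3\log 3$ for \emph{all} triples with $u_j \le 3$ and $\sum u_j = 3$, not only those arising from the trigonometric formula. One small caveat on your closing remark about equality: for your more general inequality, the configurations $(-c, c, 3)$ (and permutations) also achieve equality for every $c > 0$, so equality is not confined to $c = 0$. For the actual trigonometric $u_j$ the extra relation $\sum u_j^2 = 9$ rules these out, so your stated equality case is correct in that restricted setting; but this is a side comment and does not affect the validity of the lemma's proof.
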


\begin{proof}
    Let
    $$
        a_j = \sin^2(\alpha + \frac{2\pi j}{3}) - \frac{1}{3}\cos^2(\alpha + \frac{2\pi j}{3}) = \frac{1}{3} - \frac{2}{3} \cos(2\alpha +  \frac{4\pi j}{3})\,.
    $$
    It is easy to check that
    \begin{equation}
        \label{aproperties}
        a_1 + a_2 + a_3 = 1\,, \qquad
        a_1^2 + a_2^2 + a_3^2 = 1\,.
    \end{equation}
    Defining $b_j = (a_j + a_{j-1})/2$ (note that $a_{j+3} = a_j$), it follows that 
    $$
        b_1 + b_2 + b_3 = 1\,, \qquad b_1^2 + b_2^2 + b_3^2 = 1/2\,,
    $$
    hence $b_1, b_2, b_3 \geq 0$. Using Jensen's inequality, we deduce 
    $$
        \sum_{j = 1}^3 a_j \log(\lvert a_j\rvert) = \sum_{j = 1}^3 b_j \log(\frac{\lvert a_j\rvert \lvert a_{j-1}\rvert }{\lvert a_{j-2}\rvert }) \leq \log(\sum_{j=1}^3 b_j\frac{\lvert a_j\rvert \lvert a_{j-1}\rvert}{\lvert a_{j-2}\rvert})\,.
    $$
    By \eqref{aproperties}, we have that
    $$
        2 a_j a_{j-1} = (a_j + a_{j-1})^2 - (a_j^2 + a_{j-1}^2) = (1 - a_{j-2})^2 - (1 - a_{j-2}^2) = 2 a_{j-2} (a_{j-2} - 1)\,.
    $$
    Thus, using again \eqref{aproperties} 
    $$
        \sum_{j=1}^3 b_j\frac{\lvert a_j\rvert \lvert a_{j-1}\rvert }{\lvert a_{j-2}\rvert } = \sum_{j = 1}^3 b_j (1 - a_{j - 2}) = 1\,.
    $$
    We conclude that
    $$
        \sum_{j =1}^3 3a_j \log(\lvert 3 a_j\rvert ) = 3 \log(3) + 3 \sum_{j = 1}^3 a_j \log\lvert a_j\rvert \leq 3 \log 3\,. 
    $$
\end{proof}

\section{Discussion}
\label{sec discussion}
\subsection{Optimal value of \texorpdfstring{$\varepsilon$}{epsilon}}
An inspection of the above argument shows that $Q(f) \geq 0$ for all $f \in V_\varepsilon$ as long as
\begin{equation}
    \label{eq num}
    \inf_{\theta \in H, \lvert \theta\rvert \leq \varepsilon} \frac{1}{2}\sum_{j = 2}^4 (a(\theta + c_j) - 1)\rho(\sqrt{a(\theta + c_j)}) \geq 18 \pi \sup_{s \leq \varepsilon, \alpha \in [0, 2\pi]} \frac{1}{2 + s\frac{\psi'(s,\alpha)}{1 + \psi(s, \alpha)}}\,.
\end{equation}
(Non-rigorous) numerical computations suggest that this inequality holds up to $\varepsilon = 0.104$. The constant $\varepsilon'$ in \cref{cor main} could then be increased to $0.063$.
\begin{figure}[h]
\centering
\includegraphics[width=0.5\textwidth]{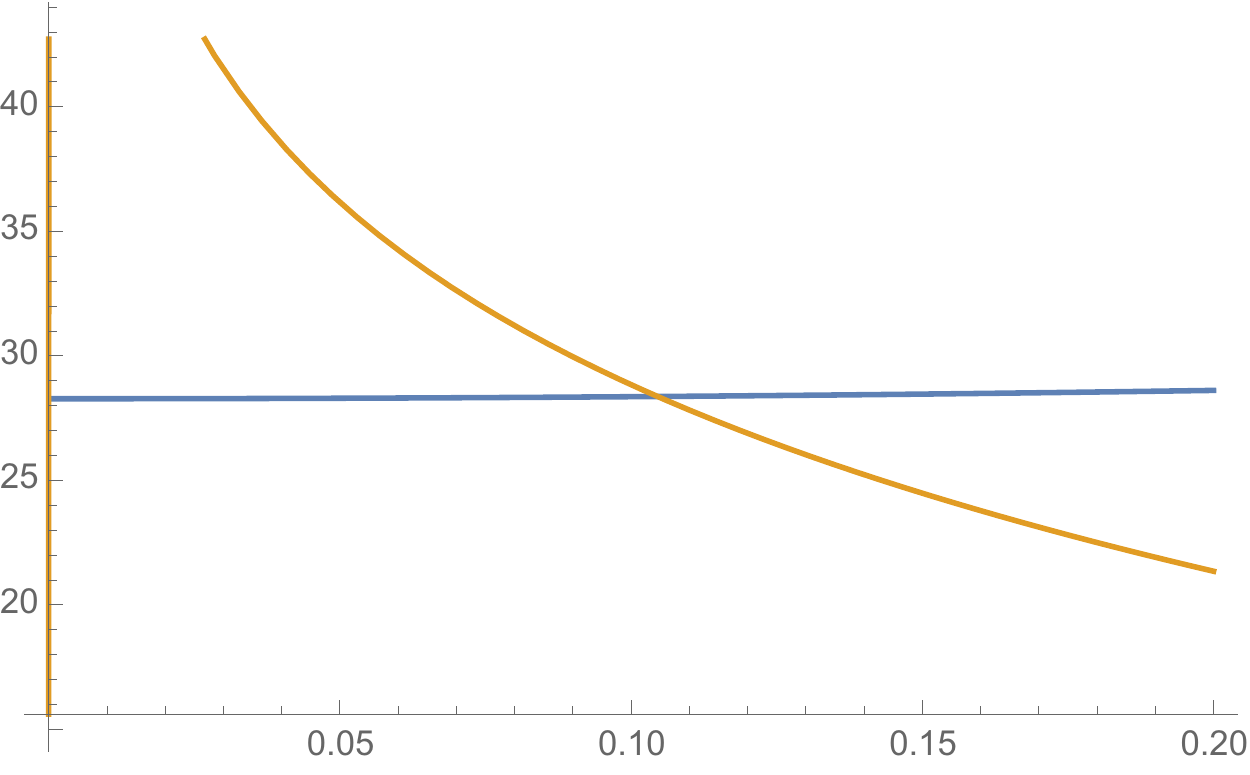}
\caption{The left hand side (yellow) and the right hand side (blue) of \eqref{eq num}.}
\end{figure}

\subsection{Fourier coefficients of \texorpdfstring{$Q$}{Q}}

In \cite{Barker+2020}, some numerical observations on the Fourier coefficients
$$
    \hat B(k,l) := B(\omega_1^{k_1} \omega_2^{k_2} \omega_3^{k_3}, \omega_4^{l_1} \omega_5^{l_2} \omega_6^{l_3})
$$
of $B$ with $k_1 + k_2 + k_3 = l_1 + l_2 + l_3 = 0$ are discussed. Namely, they are very large only when $k$ is very close to $l$ and when $k_1^2 + k_2^2 + k_3^2 \approx l_1^2 + l_2^2 + l_3^2$. We can explain this using \cref{lem dimred} as follows.

By \cref{lem dimred} and since $\lambda_0 = 1$, for all $f \in X_0$
$$
    B(f,f) = \int_C m(\theta) \lvert f(\theta)\rvert^2 \, \mathrm{d}\mathcal{H}^2_C(\theta) + \int_{C^2} n(\theta) \delta(a(\theta) - a(\theta')) f(\theta) \overline{f(\theta')} \, \mathrm{d}\mathcal{H}^2_C(\theta) \, \mathrm{d}\mathcal{H}^2_C(\theta')
$$
for certain functions $m, n$. 

The first term is a multiplier, hence it acts on the Fourier side by convolution with a fixed bump function. This bump function decays at least like $\lvert k-l\rvert^{-3}$, because the third derivative of $m$ is still integrable. This explains the large coefficients when $k$ is close to $l$.

The Fourier coefficients of the second term are the Fourier coefficients of the measure $\mu := n(\theta) \delta(a(\theta) - a(\theta'))$ supported on the $3$-manifold 
$$
    M := \{(x,y) \in C^2 \, : \, a(x) = a(y)\}\subset \R^6\,.
$$
The measure $\mu$ has a smooth, bounded density with respect to the Hausdorff measure on this manifold, except in the critical points of $a$. The Fourier transform of the parts where the measure has a smooth, bounded density can be estimated using the method of stationary phase and are of lower order than the contribution of the critical points. 
To explain what happens at a critical point (where $\det D^2 a \neq 0$), we choose coordinates $x_1, x_2, y_1, y_2$ for $C^2$, such that the critical point of $a$ is at $0$. After a scaling in $a$ and a linear change of variables, either 
\begin{equation}
    \label{eq aTaylor}
    a(x) = x_1^2 + x_2^2 + O(\lvert x\rvert^3) \quad \text{or} \quad a(x) = x_1^2 - x_2^2 + O(\lvert x\rvert^3)\,.
\end{equation}
Thus, ignoring higher order terms,
$$
    \delta(a(x) - a(y)) \approx \delta(\lvert x\rvert^2 - \lvert y\rvert^2)
    \quad \text{or}\quad
    \delta(a(x) - a(y)) \approx \delta(x_1^2 - x_2^2 - y_1^2 - y_2^2)\,.
$$
The Fourier transforms of these measures can be explicitly computed, in fact, they are their own Fourier transform. 
Now, $a$ has one local maximum and two local minima, which together with the above discussion explain why $\hat B(k,l)$ is very large on the cone $\lvert k\rvert^2 = \lvert l\rvert^2$. The contribution of all other critical points is of smaller order, since the weight $n$ vanishes there.

This disussion can be turned into a rigorous proof that the Fourier coefficients of $\mu$ concentrate near the cone $\lvert k\rvert^2 = \lvert l\rvert^2$. However, we can only show that they concentrate in e.g.
$$
    \{(k,l) : \lvert\lvert k\rvert - \lvert l\rvert\rvert \leq C\lvert k\rvert^{1/2}\}\,,
$$
and not in an $O(1)$ neighbourhood of the cone, because of the higher order terms in \eqref{eq aTaylor}.

\printbibliography

@article {Foschi2015,
    AUTHOR = {Foschi, Damiano},
     TITLE = {Global maximizers for the sphere adjoint {F}ourier restriction
              inequality},
   JOURNAL = {J. Funct. Anal.},
  FJOURNAL = {Journal of Functional Analysis},
    VOLUME = {268},
      YEAR = {2015},
    NUMBER = {3},
     PAGES = {690--702},
      ISSN = {0022-1236},
   MRCLASS = {42B10},
  MRNUMBER = {3292351},
MRREVIEWER = {Steven Michael Heilman},
       DOI = {10.1016/j.jfa.2014.10.015},
       URL = {https://doi.org/10.1016/j.jfa.2014.10.015},
}

@article {Carneiro+2017,
    AUTHOR = {Carneiro, Emanuel and Foschi, Damiano and Oliveira e Silva,
              Diogo and Thiele, Christoph},
     TITLE = {A sharp trilinear inequality related to {F}ourier restriction
              on the circle},
   JOURNAL = {Rev. Mat. Iberoam.},
  FJOURNAL = {Revista Matem\'{a}tica Iberoamericana},
    VOLUME = {33},
      YEAR = {2017},
    NUMBER = {4},
     PAGES = {1463--1486},
      ISSN = {0213-2230},
   MRCLASS = {42B10},
  MRNUMBER = {3729606},
MRREVIEWER = {Ruixiang Zhang},
       DOI = {10.4171/RMI/978},
       URL = {https://doi.org/10.4171/RMI/978},
}

@article { Shao2017,
    AUTHOR = {Shao, Shuanglin},
     TITLE = {On existence of extremizers for the {T}omas-{S}tein inequality
              for {$S^1$}},
   JOURNAL = {J. Funct. Anal.},
  FJOURNAL = {Journal of Functional Analysis},
    VOLUME = {270},
      YEAR = {2016},
    NUMBER = {10},
     PAGES = {3996--4038},
      ISSN = {0022-1236},
   MRCLASS = {42B25},
  MRNUMBER = {3478878},
MRREVIEWER = {Oscar Blasco},
       DOI = {10.1016/j.jfa.2016.02.019},
       URL = {https://doi.org/10.1016/j.jfa.2016.02.019},
}

@article {Christ+2012,
    AUTHOR = {Christ, Michael and Shao, Shuanglin},
     TITLE = {Existence of extremals for a {F}ourier restriction inequality},
   JOURNAL = {Anal. PDE},
  FJOURNAL = {Analysis \& PDE},
    VOLUME = {5},
      YEAR = {2012},
    NUMBER = {2},
     PAGES = {261--312},
      ISSN = {2157-5045},
   MRCLASS = {42A38},
  MRNUMBER = {2970708},
MRREVIEWER = {Oscar Blasco},
       DOI = {10.2140/apde.2012.5.261},
       URL = {https://doi.org/10.2140/apde.2012.5.261},
}

@article {Christ+2012b,
    AUTHOR = {Christ, Michael and Shao, Shuanglin},
     TITLE = {On the extremizers of an adjoint {F}ourier restriction
              inequality},
   JOURNAL = {Adv. Math.},
  FJOURNAL = {Advances in Mathematics},
    VOLUME = {230},
      YEAR = {2012},
    NUMBER = {3},
     PAGES = {957--977},
      ISSN = {0001-8708},
   MRCLASS = {42B08},
  MRNUMBER = {2921167},
MRREVIEWER = {Oscar Blasco},
       DOI = {10.1016/j.aim.2012.03.020},
       URL = {https://doi.org/10.1016/j.aim.2012.03.020},
}

@misc{Shao2016,
  doi = {10.48550/ARXIV.1601.07119},
  
  url = {https://arxiv.org/abs/1601.07119},
  
  author = {Shao, Shuanglin},
  
  keywords = {Analysis of PDEs (math.AP), Classical Analysis and ODEs (math.CA), FOS: Mathematics, FOS: Mathematics},
  
  title = {On smoothness of extremizers of the Tomas-Stein inequality for $S^1$},
  
  publisher = {arXiv},
  
  year = {2016},
  
  copyright = {arXiv.org perpetual, non-exclusive license}
}

@article { OeS+2022,
    AUTHOR = {Oliveira e Silva, Diogo and Thiele, Christoph and
              Zorin-Kranich, Pavel},
     TITLE = {Band-limited maximizers for a {F}ourier extension inequality
              on the circle},
   JOURNAL = {Exp. Math.},
  FJOURNAL = {Experimental Mathematics},
    VOLUME = {31},
      YEAR = {2022},
    NUMBER = {1},
     PAGES = {192--198},
      ISSN = {1058-6458},
   MRCLASS = {42B10 (33C10 65D30)},
  MRNUMBER = {4399118},
       DOI = {10.1080/10586458.2019.1596847},
       URL = {https://doi.org/10.1080/10586458.2019.1596847},
}

@misc{Barker+2020,
  doi = {10.48550/ARXIV.2002.05118},
  
  url = {https://arxiv.org/abs/2002.05118},
  
  author = {Barker, James and Thiele, Christoph and Zorin-Kranich, Pavel},
  
  keywords = {Classical Analysis and ODEs (math.CA), Functional Analysis (math.FA), FOS: Mathematics, FOS: Mathematics, 42B10},
  
  title = {Band-limited maximizers for a Fourier extension inequality on the circle, II},
  
  publisher = {arXiv},
  
  year = {2020},
  
  copyright = {arXiv.org perpetual, non-exclusive license}
}

@article {Borwein+12,
    AUTHOR = {Borwein, Jonathan M. and Straub, Armin and Wan, James and
              Zudilin, Wadim},
     TITLE = {Densities of short uniform random walks},
      NOTE = {With an appendix by Don Zagier},
   JOURNAL = {Canad. J. Math.},
  FJOURNAL = {Canadian Journal of Mathematics. Journal Canadien de
              Math\'{e}matiques},
    VOLUME = {64},
      YEAR = {2012},
    NUMBER = {5},
     PAGES = {961--990},
      ISSN = {0008-414X},
   MRCLASS = {60G50 (11R06 33C20 82B41)},
  MRNUMBER = {2979573},
MRREVIEWER = {Aklilu Zeleke},
       DOI = {10.4153/CJM-2011-079-2},
       URL = {https://doi.org/10.4153/CJM-2011-079-2},
}

@book{Pearson06,
	title = {A mathematical theory of random migration  },
	copyright = {Public domain. The BHL considers that this work is no longer under copyright protection.},
	publisher = {London, Dulau and co},
	author = {Pearson, Karl},
	year = {1906},
}

@article{Car+2015,
    author = {Carneiro, Emanuel and Oliveira e Silva, Diogo},
    title = "{Some Sharp Restriction Inequalities on the Sphere}",
    journal = {International Mathematics Research Notices},
    volume = {2015},
    number = {17},
    pages = {8233-8267},
    year = {2014},
    month = {10},
    abstract = "{In this paper, we find the sharp forms and characterize the complex-valued extremizers of the adjoint Fourier restriction inequalities on the sphere \\[\\|\\widehat \\{f \\sigma \\}\\|\_\\{L^\\{p\\}(\\mathbb \\{R\\} ^\\{d\\})\\} \\lesssim \\|f\\|\_\\{L^\\{q\\}(\\mathbb \\{S\\} ^\\{d-1\\},\\sigma )\\}\\] in the cases \\$(d,p,q) = (d,2k, q)\\$ with \\$d,k \\in \\mathbb \\{N\\}\\$ and \\$q\\in \\mathbb \\{R\\} ^+ \\cup \\{\\\\{ \\}\\infty \\{\\\\} \\}\\$ satisfying: (a) \\$k = 2\\$, \\$q \\geq 2\\$, and \\$3 \\leq d\\leq 7\\$; (b) \\$k = 2\\$, \\$q \\geq 4\\$, and \\$d \\geq 8\\$; (c) \\$k \\geq 3\\$, \\$q \\geq 2k\\$, and \\$d \\geq 2\\$. We also prove a sharp multilinear weighted restriction inequality, with weight related to the \\$k\\$-fold convolution of the surface measure.}",
    issn = {1073-7928},
    doi = {10.1093/imrn/rnu194},
    url = {https://doi.org/10.1093/imrn/rnu194},
}

@article {OeS+2021,
    AUTHOR = {Oliveira e Silva, Diogo and Quilodr\'{a}n, Ren\'{e}},
     TITLE = {Global maximizers for adjoint {F}ourier restriction
              inequalities on low dimensional spheres},
   JOURNAL = {J. Funct. Anal.},
  FJOURNAL = {Journal of Functional Analysis},
    VOLUME = {280},
      YEAR = {2021},
    NUMBER = {7},
     PAGES = {Paper No. 108825, 73},
      ISSN = {0022-1236},
   MRCLASS = {42B10 (33C10 42B37 45C05)},
  MRNUMBER = {4211023},
MRREVIEWER = {Hua Wang},
       DOI = {10.1016/j.jfa.2020.108825},
       URL = {https://doi.org/10.1016/j.jfa.2020.108825},
}

@article {Fanelli+2010,
    AUTHOR = {Fanelli, Luca and Vega, Luis and Visciglia, Nicola},
     TITLE = {On the existence of maximizers for a family of restriction
              theorems},
   JOURNAL = {Bull. Lond. Math. Soc.},
  FJOURNAL = {Bulletin of the London Mathematical Society},
    VOLUME = {43},
      YEAR = {2011},
    NUMBER = {4},
     PAGES = {811--817},
      ISSN = {0024-6093},
   MRCLASS = {42B15 (35A23 35J10 35L05 42B20)},
  MRNUMBER = {2820166},
MRREVIEWER = {Norbert Ortner},
       DOI = {10.1112/blms/bdr014},
       URL = {https://doi.org/10.1112/blms/bdr014},
}

@article {Frank+2016,
    AUTHOR = {Frank, Rupert L. and Lieb, Elliott H. and Sabin, Julien},
     TITLE = {Maximizers for the {S}tein-{T}omas inequality},
   JOURNAL = {Geom. Funct. Anal.},
  FJOURNAL = {Geometric and Functional Analysis},
    VOLUME = {26},
      YEAR = {2016},
    NUMBER = {4},
     PAGES = {1095--1134},
      ISSN = {1016-443X},
   MRCLASS = {42B08},
  MRNUMBER = {3558306},
MRREVIEWER = {Tuba Ari},
       DOI = {10.1007/s00039-016-0380-9},
       URL = {https://doi.org/10.1007/s00039-016-0380-9},
}

@article {Goncalves+2022,
    AUTHOR = {Gon\c{c}alves, Felipe and Negro, Giuseppe},
     TITLE = {Local maximizers of adjoint {F}ourier restriction estimates
              for the cone, paraboloid and sphere},
   JOURNAL = {Anal. PDE},
  FJOURNAL = {Analysis \& PDE},
    VOLUME = {15},
      YEAR = {2022},
    NUMBER = {4},
     PAGES = {1097--1130},
      ISSN = {2157-5045},
   MRCLASS = {35A23 (42B10 42B37)},
  MRNUMBER = {4478298},
       DOI = {10.2140/apde.2022.15.1097},
       URL = {https://doi.org/10.2140/apde.2022.15.1097},
}

@misc{Negro+2022,
  doi = {10.48550/ARXIV.2209.03916},
  
  url = {https://arxiv.org/abs/2209.03916},
  
  author = {Negro, Giuseppe and Oliveira e Silva, Diogo  and Thiele, Christoph},
  
  keywords = {Classical Analysis and ODEs (math.CA), Analysis of PDEs (math.AP), Functional Analysis (math.FA), FOS: Mathematics, FOS: Mathematics, 42B10},
  
  title = {When does $e^{-\lvert \tau \rvert }$ maximize Fourier extension for a conic section?},
  
  publisher = {arXiv},
  
  year = {2022},
  
  copyright = {arXiv.org perpetual, non-exclusive license}
}

@article {Foschi+2017,
    AUTHOR = {Foschi, D. and Oliveira e Silva, D.},
     TITLE = {Some recent progress on sharp {F}ourier restriction theory},
   JOURNAL = {Anal. Math.},
  FJOURNAL = {Analysis Mathematica},
    VOLUME = {43},
      YEAR = {2017},
    NUMBER = {2},
     PAGES = {241--265},
      ISSN = {0133-3852},
   MRCLASS = {42B10},
  MRNUMBER = {3685152},
MRREVIEWER = {Norbert Ortner},
       DOI = {10.1007/s10476-017-0306-2},
       URL = {https://doi.org/10.1007/s10476-017-0306-2},
}

@misc{Ciccone+2022,
  doi = {10.48550/ARXIV.2208.09441},
  
  url = {https://arxiv.org/abs/2208.09441},
  
  author = {Ciccone, Valentina and Gonçalves, Felipe},
  
  keywords = {Classical Analysis and ODEs (math.CA), Number Theory (math.NT), FOS: Mathematics, FOS: Mathematics, 42B10},
  
  title = {Sharp Fourier Extension on the Circle Under Arithmetic Constraints},
  
  publisher = {arXiv},
  
  year = {2022},
  
  copyright = {arXiv.org perpetual, non-exclusive license}
}

\end{document}